\def\CC{\mathbb{C}}
\def\RR{\mathbb{R}}
\def\NN{\mathbb{N}}
\theoremstyle{plain}
\newtheorem{theorem}{\bf Theorem}[section]
\newtheorem{lemma}[theorem]{\bf Lemma}
\theoremstyle{remark}
\newtheorem{definition}[theorem]{\bf Definition}
\newtheorem*{observation}{\bf Observations}
\newtheorem{example}[theorem]{\bf Example}
\newtheorem{remark}[theorem]{\bf Remark}
\title[Approximative $K$-Atomic Decompositions and frames in Banach Spaces]{Approximative $K$-Atomic Decompositions and frames in Banach Spaces}
\author[S. Jahan]{Shah Jahan}
\address{Shah Jahan, Department of Mathematics,
University of Delhi, Delhi-110007, India}
\email{chowdharyshahjahan@gmail.com}
\begin{document}
\subjclass[2010]{42A38, 42C15, 42C30, 46B15} \keywords{Frames; $K$-frames; Atomic decomposition; K-Atomic decomposition; $\mathcal{X}_d$-Bessel sequence ; $\mathcal{X}_d$-frames }
\begin{abstract}\baselineskip12pt
[L. Gavruta, Frames for Operators, Appl. comput. Harmon. Anal. 32(2012), 139-144] introduced a special kind of frames, named $K$-frames, where $K$ is an operator, in Hilbert spaces, is significant in frame theory and has many applications. In this paper, first of all, we have introduced the notion of approximative $K$-atomic decomposition in Banach spaces.  We gave two characterizations regarding the existence of approximative $K$-atomic decompositions in Banach spaces. Also some results on the existence of approximative $K$-atomic decompositions are obtained. We discuss several methods to construct approximative $K$-atomic decomposition for Banach Spaces. Further, approximative $\mathcal{X}_d$-frame and approximative $\mathcal{X}_d$-Bessel sequence are introduced and studied. Two necessary conditions are given under which an approximative $\mathcal{X}_d$-Bessel sequence and approximative $\mathcal{X}_d$-frame give rise to a bounded operator with respect to which there is an approximative $K$-atomic decomposition. Examples and counter examples are provided to support our concept. Finally, a possible application is given.
\end{abstract}
\thispagestyle{empty}
\maketitle
\thispagestyle{empty}

\baselineskip15pt
\section{Introduction and Preliminaries}
Fourier transform has been a major tool in analysis for over a century. It has a serious lacking for signal analysis in which it hides in its phase information concerning the moment of emission and duration of a signal. What actually needed was a localized time frequency representation which has this information encoded in it. In 1946, Dennis Gabor \cite{6} filled this gap and formulated a fundamental approach to signal decomposition in terms of elementary signals. On the basis of this development, in 1952, Duffin and Schaeffer \cite{4} introduced frames for Hilbert spaces to study some deep problems in non-harmonic Fourier series. In fact, they abstracted the fundamental notion of Gabor for studying signal processing.
 Let $\mathcal{H}$ be a real (or complex) separable Hilbert space with inner product $\langle .,. \rangle.$ A countable sequence $\{f_n\} \subset \mathcal{H}$ is called a frame for the Hilbert space $\mathcal{H},$ if there exist positive constants $A,~ B >0$ such that
\begin{align}\label{1} A\| f\|^2_\mathcal{H} \leq \sum_{n=1}^\infty |\langle f, f_n \rangle|^2 \leq B\|f\|^2_\mathcal{H},~~~ \mbox{for all}~~ \in \mathcal{H}
 \end{align} The positive numbers $A$ and $B$ are called the lower and upper frame bounds of the frame, respectively. These bounds are not unique. The inequality in (\ref{1}) is called the frame inequality of the frame. If $\{f_n\}$ is a frame for $\mathcal{H}$ then the following operators are associated with it.
  \begin{enumerate}[(a)]
 \item Pre-frame operator $T: l^2(\mathbb{N}) \longrightarrow \mathcal{H}$ is defined as $T\{c_n\}_{n=1}^\infty= \sum\limits_{k=1}^\infty c_nf_n,~~\{c_n\}_{n=1}^\infty \in l^2(\mathbb{N}).$
 \item Analysis operator $T^*: \mathcal{H} \longrightarrow l^2(\mathbb{N}), T^*= \{\langle f, f_k\rangle\}_{k=1}^\infty~~~ f \in \mathcal{H}.$
 \item Frame operator $S=TT^*=: \mathcal{H} \longrightarrow \mathcal{H},~~ S_f= \sum\limits_{k=1}^\infty \langle f, f_k \rangle f_k, ~~f \in \mathcal{H}.$ The frame $S$ is bounded, linear and invertible on $\mathcal{H}$. Thus, a frame for $\mathcal{H}$ allows each vector in $\mathcal{H}$ to be written as a linear combination of the elements in the frame, but the linear independence between the elements is not required; i.e for each vector $f \in \mathcal{H}$ we have,
     \begin{eqnarray*}
     f=SS^{-1}f= \sum\limits_{k=1}^\infty \langle f, f_k \rangle f_k.
     \end{eqnarray*}
   \end{enumerate}
   For more details related to frame and Riesz bases in Hilbert spaces, one may refer to \cite{a,3}. These ideas did not generate much interest outside of non-harmonic Fourier series and signal processing for more than three decades until Daubechies et al. \cite{g} reintroduced frames. After this landmark paper the theory of frames begin to be studied widely and found many applications to wavelet and Gabor transforms in which frames played an important role. Feichtinger and Gr\"{o}cheing \cite{5} extended the idea of Hilbert frames to Banach spaces and called it atomic decomposition. A more general concept called Banach frame was introduced by Gr\"{o}cheing \cite{8} and were further studied in \cite{SKK,WZ}. Banach frames were developed for the theory of frames in the context of Gabor and Wavelet analysis. Christensen and Heil \cite{2} studied some perturbation results for Banach frames and atomic decompositions.\\
    In particular, frames are widely used in sampling theory in \cite{ag} amounts to the construction of Banach frames consisting of reproducing kernals for a large class of shift invariant spaces.  Aldroubi et al. \cite{ab} used Banach frames in various irregular sampling problems. Elder and Forney \cite{qc} used tight frames for quantum measurement. Gr\"{o}chenig \cite{gl} emphasised that localization of a frame is a necessary condition for its extension to a Banach frame for the associated Banach spaces. He also observed that localized frames are universal Banach frames for the associated family of Banach spaces.
Fornasier \cite{fo}  studied Banach frames for $\alpha$-modulation spaces. In fact, he gave a Banach frame characterization for the $\alpha$-modulation spaces. Shah et.al  \cite{SVC} defined and studied Banach frames to a new geometric notation; in fact they gave a sufficient condition and a necessary condition for a cone associated with a Banach frame to be a generating cone.\\
 Casazza et al. \cite{CCS} studied $\mathcal{X}_d$-frames and $\mathcal{X}_d$-Bessel sequences in Banach spaces. Stoeva \cite{DT} gave some perturbation results for $\mathcal{X}_d$-frames and atomic decompositions. Kaushik and Sharma \cite{n} studied approximative atomic decompositions in Banach spaces. For further studies related to approximative frame one may refer \cite{SH, SS,IS1}. Gavruta \cite{7} introduced and studied atomic system for an operator $K$ and the notion of $K$-frame in a Hilbert space. Xiao et al. \cite{12} discussed relationship between $K$-frames and ordinary frames in Hilbert spaces. Poumai and Jahan \cite{KA} introduced K-atomic decompositions in Banach spaces.\\
 \textbf{outline of the paper.}
 In this paper, we have introduced the notion of approximative $K$-atomic decomposition in Banach spaces.  We gave two characterizations regarding the existence of approximative $K$-atomic decompositions in Banach spaces. Also some results on the existence of approximative $K$-atomic decompositions are obtained. We discuss several methods to construct approximative $K$-atomic decomposition for Banach Spaces. Further, approximative $\mathcal{X}_d$-frame and approximative $\mathcal{X}_d$-Bessel sequence are introduced and studied. Two necessary conditions are given under which an approximative $\mathcal{X}_d$-Bessel sequence and approximative $\mathcal{X}_d$-frame give rise to bounded operators with respect to which there is an approximative $K$-atomic decomposition. Examples and counter examples are provided to support our concept of approximative K-atomic decomposition. Finally, we gave a possible application of our work.\\
Next we give some basic notations. Throughout this paper, $\mathcal{X}$ will denote a separable Banach space over the scalar field K($\RR$ or $\CC$), $\mathcal{X}^*$ the dual space of $\mathcal{X}$, $\mathcal{X}_d$ a BK-space and $L(\mathcal{X},\mathcal{Y} )$ will denote the space of all bounded linear operators from $\mathcal{X}$ into $\mathcal{Y}$. For $ T\in L(\mathcal{X}) $, $T^{*}$ denotes the adjoint of $T$, $\pi:\mathcal{X} \longrightarrow \mathcal{X}^{**}$ is the natural canonical projection from $\mathcal{X}$ onto $\mathcal{X}^{**}.$ Also $T^\dagger$  denote the pseudo inverse of the operator $T.$ Note that $TT^\dagger f=f$ for all $ f \in R(K).$ Throughout $R(K)$ is closed.
\begin{definition}
 \cite{8}Let $\mathcal{X}$ be a Banach space and $\mathcal{X}_d$ be a BK-space. A sequence $(x_n,f_n)(\lbrace x_n \rbrace \subset \mathcal{X},\lbrace f_n \rbrace \subset \mathcal{X}^*)$ is called an \emph{atomic decomposition} for $\mathcal{X}$ with respect to $\mathcal{X}_d$ if the following statements hold:
\begin{enumerate}[(a)]
\item $\lbrace f_n(x)\rbrace\in \mathcal{X}_d$, for all $x\in ~\mathcal{X}$.
\item There exist constants $A$ and $B$ with $0<A\leq B<\infty$ such that
\begin{eqnarray}
A\Vert x\Vert_\mathcal{X}~\leq~\Vert\lbrace f_n(x)\rbrace\Vert_{\mathcal{X}_d}~\leq~ B\Vert x\Vert_\mathcal{X},~ \text{for all} \ x\in \mathcal{X}
\end{eqnarray}

\item $x= \sum\limits_{n=1}^{\infty} f_n(x)x_n$,\ \ for all $x\in \mathcal{X}$.
\end{enumerate}
\end{definition}
Next, we state some lemmas which we will use in the subsequent results.
\begin{lemma}\label{L1}
 \cite{TL,WZ}Let $\mathcal{X}$, $\mathcal{Y}$ be Banach spaces and $T:\mathcal{X} \longrightarrow \mathcal{Y}$ be a bounded linear operator. Then, the following conditions are equivalent:
\begin{enumerate}[(a)]
\item There exist two continuous projection operators $P:\mathcal{X} \rightarrow \mathcal{X} $ and $Q:\mathcal{Y} \rightarrow \mathcal{Y}$ such that
\begin{eqnarray}
P(\mathcal{X})=kerT ~and~ Q(\mathcal{Y})=T(\mathcal{X}).
\end{eqnarray}
\item $T$ has a pseudo inverse operator $T^\dagger$.
\end{enumerate}
If two continuous projection operators $P:\mathcal{X}\rightarrow \mathcal{X}$ and $Q:\mathcal{Y} \rightarrow \mathcal{Y}$ satisfies (2.3), then there exists a pseudo inverse operator $T^\dagger$ of $T$ such that $T^\dagger T=I_\mathcal{X}-P$ \ and \ $TT^\dagger =Q$, where $I_\mathcal{X}$ is the identity operator on $\mathcal{X}$.
\end{lemma}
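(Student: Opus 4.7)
The plan is to prove both implications directly from standard Banach space machinery, namely the open mapping theorem together with the fact that the range of a continuous projection is closed.

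For $(a) \Rightarrow (b)$, I would first note that since $Q$ is a continuous projection with $Q(\mathcal{Y}) = T(\mathcal{X})$, the range $T(\mathcal{X})$ is closed in $\mathcal{Y}$. Likewise, $P$ being a continuous projection onto $\ker T$ yields a topological direct sum $\mathcal{X} = \ker T \oplus \mathcal{X}_1$, where $\mathcal{X}_1 := (I_\mathcal{X} - P)(\mathcal{X})$ is closed. The restriction $T|_{\mathcal{X}_1}: \mathcal{X}_1 \to T(\mathcal{X})$ is then a continuous bijection between Banach spaces, so by the open mapping theorem it admits a bounded inverse $S: T(\mathcal{X}) \to \mathcal{X}_1$. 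Setting $T^\dagger := S \circ Q : \mathcal{Y} \to \mathcal{X}$, which is bounded, I would verify by direct computation that $TT^\dagger = Q$ and $T^\dagger T = I_\mathcal{X} - P$, delivering the pseudo-inverse together with the relations stated at the end of the lemma.

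For $(b) \Rightarrow (a)$, I would set $P := I_\mathcal{X} - T^\dagger T$ and $Q := TT^\dagger$. Using the characterizing identities $T T^\dagger T = T$ and $T^\dagger T T^\dagger = T^\dagger$ of a pseudo-inverse, a short calculation confirms $P^2 = P$ and $Q^2 = Q$, both continuous. To identify the ranges, $TP = 0$ gives $P(\mathcal{X}) \subset \ker T$, while any $x \in \ker T$ satisfies $Px = x$, so $P(\mathcal{X}) = \ker T$; the inclusion $Q(\mathcal{Y}) \subset T(\mathcal{X})$ is immediate, and any $y = Tx \in T(\mathcal{X})$ satisfies $Qy = TT^\dagger T x = Tx = y$, giving $Q(\mathcal{Y}) = T(\mathcal{X})$.

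The only step that requires any real care is establishing the closedness of $T(\mathcal{X})$ in the forward direction, since this is what allows the open mapping theorem to produce a bounded $S$; but this follows at once from the hypothesis $Q(\mathcal{Y}) = T(\mathcal{X})$ because images of continuous projections are closed. The remaining manipulations are direct algebraic verifications with projection identities, so I do not anticipate a substantive obstacle.
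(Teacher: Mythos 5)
The paper states this lemma without proof, merely citing \cite{TL,WZ}, so there is no internal argument to compare against; your write-up is a correct and complete proof and follows exactly the standard route of those sources (closedness of $T(\mathcal{X})$ and of $(I_\mathcal{X}-P)(\mathcal{X})$ as ranges of continuous projections, the open mapping theorem to invert $T$ restricted to a complement of $\ker T$, and direct algebraic verification for the converse). The only remark worth adding is that your converse in fact needs only the single identity $TT^\dagger T=T$, so it is valid under either common definition of pseudo inverse.
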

\begin{lemma}
 \cite{1, 10}Let $\mathcal{X}$ be a Banach space. If $T\in L(\mathcal{X})$ has a generalized inverse S$\in L(\mathcal{X})$, then $TS$, $ST$ are projections and $TS(\mathcal{X})=T(\mathcal{X})$ and $ST(\mathcal{X})=S(\mathcal{X})$.
\end{lemma}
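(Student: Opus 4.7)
The plan is to unpack the two defining algebraic identities of a generalized inverse, namely $TST=T$ and $STS=S$, and to derive each of the four conclusions from them by a short direct computation; none of the deeper Banach-space machinery of Lemma~\ref{L1} is needed for this lemma.

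First I would establish idempotency. Grouping $(TS)(TS)=(TST)S$ and applying $TST=T$ yields $(TS)^2=TS$, so $TS$ is a projection on $\mathcal{X}$. A symmetric grouping $(ST)(ST)=S(TST)=ST$ shows that $ST$ is a projection as well. Since $TS$ and $ST$ are bounded by hypothesis, these idempotents are continuous projections in $L(\mathcal{X})$.

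Next I would establish the range equalities $TS(\mathcal{X})=T(\mathcal{X})$ and $ST(\mathcal{X})=S(\mathcal{X})$ by double inclusion. The inclusions $TS(\mathcal{X})\subseteq T(\mathcal{X})$ and $ST(\mathcal{X})\subseteq S(\mathcal{X})$ are immediate from the factorizations $TS(x)=T(Sx)$ and $ST(x)=S(Tx)$. For the reverse direction, any $y=Tx\in T(\mathcal{X})$ satisfies $y=Tx=TSTx=TS(Tx)\in TS(\mathcal{X})$ by $TST=T$, and any $y=Sx\in S(\mathcal{X})$ satisfies $y=Sx=STSx=ST(Sx)\in ST(\mathcal{X})$ by $STS=S$, which closes both equalities.

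No step presents a serious obstacle; the lemma is essentially an algebraic identity. The one point requiring care is to make explicit that both defining relations $TST=T$ and $STS=S$ are in force: the first relation alone handles the idempotency of $TS$ and $ST$ together with the equality $TS(\mathcal{X})=T(\mathcal{X})$, whereas the second is genuinely needed to obtain $S(\mathcal{X})\subseteq ST(\mathcal{X})$. If a weaker notion of generalized inverse were intended, this inclusion would be the place where the argument could fail.
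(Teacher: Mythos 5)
Your proof is correct and is the standard argument: the paper itself states this lemma only as a citation to Caradus and Schmoeger and supplies no proof, so there is nothing to diverge from. Your closing caveat is also well placed --- with the usual definition of a generalized inverse both relations $TST=T$ and $STS=S$ hold, and as you note the second is exactly what is needed for $S(\mathcal{X})\subseteq ST(\mathcal{X})$, while everything else follows from the first alone.
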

\begin{lemma}\label{KL}
\cite{n, IS2} Let $\mathcal{X}$ be a Banach space and $\lbrace f_n \rbrace \subset \mathcal{X}^*$ be a sequence such that $ \lbrace x \in \mathcal{X} : f_n(x) = 0, for \ all  \ n \in \NN \rbrace = \lbrace 0 \rbrace$. Then $\mathcal{X}$ is linearly isometric to the Banach space $\mathcal{X}_d= \lbrace \lbrace f_n(x) \rbrace : x \in \mathcal{X} \rbrace$, where the norm is given by $ \Vert \lbrace f_n(x) \rbrace \Vert_{\mathcal{X}_d}$=$\Vert x \Vert_\mathcal{X}$, $x \in \mathcal{X}$.
\end{lemma}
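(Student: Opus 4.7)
The natural candidate is the evaluation map $T:\mathcal{X}\to\mathcal{X}_d$ defined by $T(x)=\{f_n(x)\}$. The plan is to verify that $T$ is a well-defined linear isometric bijection, and that the proposed norm on $\mathcal{X}_d$ actually makes sense.

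First I would check that the norm on $\mathcal{X}_d$ is well-defined. Suppose $\{f_n(x)\}=\{f_n(y)\}$ as sequences; then $f_n(x-y)=0$ for every $n\in\NN$, and the separation hypothesis forces $x-y=0$, so $\|x\|_{\mathcal{X}}=\|y\|_{\mathcal{X}}$. Thus the rule $\|\{f_n(x)\}\|_{\mathcal{X}_d}:=\|x\|_{\mathcal{X}}$ assigns a single value to each element of $\mathcal{X}_d$, and the standard norm axioms transfer directly from $\|\cdot\|_{\mathcal{X}}$.

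Next I would check the four properties of $T$. Linearity is immediate from the linearity of each $f_n$. Surjectivity is built into the definition of $\mathcal{X}_d$. Injectivity is exactly the separation assumption: $T(x)=0$ means $f_n(x)=0$ for all $n$, hence $x=0$. The isometric property $\|T(x)\|_{\mathcal{X}_d}=\|x\|_{\mathcal{X}}$ is by construction of the norm.

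Finally, to justify calling $\mathcal{X}_d$ a Banach space, I would observe that since $\mathcal{X}$ is complete and $T$ is a surjective linear isometry, $\mathcal{X}_d$ inherits completeness from $\mathcal{X}$ via $T$. There is no real obstacle here; the only point that deserves a line of care is the well-definedness of the norm, which is precisely where the hypothesis $\{x:f_n(x)=0\ \forall n\}=\{0\}$ is used.
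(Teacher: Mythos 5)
Your proof is correct. Note that the paper itself offers no proof of this lemma; it is quoted with citations to Kaushik--Sharma and Singer, so there is nothing internal to compare against. Your argument is the standard transport-of-norm construction, and you correctly isolate the one point that needs the hypothesis $\lbrace x : f_n(x)=0 \ \forall n\rbrace=\lbrace 0\rbrace$, namely that the evaluation map $x\mapsto\lbrace f_n(x)\rbrace$ is injective so that the norm $\Vert\lbrace f_n(x)\rbrace\Vert_{\mathcal{X}_d}:=\Vert x\Vert_{\mathcal{X}}$ is well defined; linearity, surjectivity, the isometry property, and completeness of $\mathcal{X}_d$ then all follow by transport along this bijection exactly as you say.
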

\section{Main Results}
  Poumai and Jahan \cite{KA} defined and studied $K$-atomic decomposition as a generalization of $K$-frames in Banach spaces. Here we shall extend this study further and introduced the concept of approximative K-atomic decomposition in Banach spaces and obtained new and interesting results. We start this section with the following definition of approximative K-Atomic decomposition:
\begin{definition}
Let $\mathcal{X}$ be a Banach Space, $\lbrace x_n\rbrace\subset \mathcal{X},\{h_{n,i}\}\underset{n \in \mathbb{N}}{_{i=1,2,3,...,m_n}}\subset \mathcal{X}^*,$ where $\{m_n\}$ is an increasing sequence of positive integer and $K \in L(\mathcal{X})$. A pair $(\{x_n\},\{h_{n,i}\}\underset{n \in \mathbb{N}}{_{i=1,2,3,...,m_n}})$ is called an \emph{approximative K-atomic decomposition} for $\mathcal{X}$ with respect to $\mathcal{X}_d,$ if the following statements holds:
\begin{enumerate}[(a)]
\item $\{h_{n,i}(x)\}\underset{n \in \mathbb{N}}{_{i=1,2,3,...,m_n}} \in \mathcal{X}_d$, for all $x\in \mathcal{X}.$
\item There exist constants $A$ and $B$ with $0< A\leq B < \infty$ such that
\begin{eqnarray*}
A\parallel K(x)\parallel_\mathcal{X}~\leq~ \parallel \{h_{n,i}(x)\}\underset{n \in \mathbb{N}}{_{i=1,2,3,...,m_n}}\Vert_{\mathcal{X}_d} \leq B\parallel x\parallel_\mathcal{X},~ \mbox{for all} \ x\in \mathcal{X}.
\end{eqnarray*}
\item $\lim\limits_{n \rightarrow \infty}\sum\limits_{i=1}^{m_n}h_{n,i}(x)x_i$ converges for all $x\in \mathcal{X}$ and\\ $K(x)=\lim\limits_{n \rightarrow \infty}\sum\limits_{i=1}^{m_n}h_{n,i}(x)x_i$.
\end{enumerate}

 The constants $A$ and $B$ are called lower and upper bounds of the approximative $K$-atomic decomposition $(\{x_n\},\{h_{n,i}\}_{i=1,2,3,...,m_n}).$
\end{definition}
\begin{observation}
 If $(\{x_n\},\{f_n\})$ is a $K$-atomic decomposition for $\mathcal{X}$ with respect to $\mathcal{X}_d,$ then for $h_{n,i}=f_i, i=1,2,...,n, n\in \mathbb{N},$ $(\{x_n\},\{h_{n,i}\})$ is an approximative $K$-atomic decomposition for $\mathcal{X}$ with respect to some associated Banach space $\mathcal{X}_d.$
\end{observation}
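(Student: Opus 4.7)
The plan is to take $m_n = n$ and $h_{n,i} = f_i$ for $1 \le i \le n$, and then verify all three clauses of the definition by reducing each to the corresponding property of the underlying $K$-atomic decomposition.

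Clause (c) is almost immediate. Since $h_{n,i}$ does not depend on $n$, one has
\[
\sum_{i=1}^{m_n} h_{n,i}(x)\, x_i \;=\; \sum_{i=1}^{n} f_i(x)\, x_i,
\]
which is exactly the $n$-th partial sum of the $K$-atomic expansion $K(x) = \sum_{i=1}^{\infty} f_i(x)\,x_i$. Hence the limit exists and equals $K(x)$ for every $x \in \mathcal{X}$, as required.

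For clauses (a) and (b), the main point is to identify the ``associated Banach space''. The plan is to exhibit it as a relabelling of $\mathcal{X}_d$. Let $\GG := \{(n,i) : n \in \NN,\ 1 \le i \le m_n\}$ and define
$\Phi : \mathcal{X}_d \to \KK^{\GG}$ by sending $\{d_i\}_{i \in \NN}$ to $\{c_{n,i}\}_{(n,i) \in \GG}$ with $c_{n,i} := d_i$. Set $\widetilde{\mathcal{X}}_d := \Phi(\mathcal{X}_d)$, equipped with the transported norm $\pp{\Phi(\{d_i\})}_{\widetilde{\mathcal{X}}_d} := \pp{\{d_i\}}_{\mathcal{X}_d}$; then $\Phi$ is an isometric isomorphism, so $\widetilde{\mathcal{X}}_d$ inherits the BK-space structure of $\mathcal{X}_d$. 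By construction $\{h_{n,i}(x)\} = \Phi(\{f_i(x)\}) \in \widetilde{\mathcal{X}}_d$ and $\pp{\{h_{n,i}(x)\}}_{\widetilde{\mathcal{X}}_d} = \pp{\{f_i(x)\}}_{\mathcal{X}_d}$, so (a) follows, and the double inequality in (b) is inherited verbatim from the $K$-atomic decomposition bounds $A\pp{K(x)}_{\mathcal{X}} \le \pp{\{f_i(x)\}}_{\mathcal{X}_d} \le B\pp{x}_{\mathcal{X}}$.

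I do not expect any genuine obstacle: the observation is essentially a bookkeeping statement that recognizes an ordinary $K$-atomic decomposition as the ``non-approximative'' special case of its approximative analogue, once the associated BK-space is arranged so that the double-indexing collapses back to single indexing. The only thing worth flagging is that one should explicitly declare the associated space (rather than reusing the symbol $\mathcal{X}_d$ for both), since the approximative setting formally requires sequences indexed by $\GG$ rather than by $\NN$.
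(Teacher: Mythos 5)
Your proposal is correct: the paper states this Observation without proof, and your verification (take $m_n=n$, note that the partial sums $\sum_{i=1}^{n}h_{n,i}(x)x_i$ coincide with those of the $K$-atomic expansion, and transport the norm of $\mathcal{X}_d$ to the doubly-indexed relabelling so that (a) and (b) are inherited verbatim) is exactly the bookkeeping the paper leaves implicit. Your remark that the associated BK-space should be formally distinguished from the original $\mathcal{X}_d$ is a fair point, and is already acknowledged by the paper's phrasing ``with respect to some associated Banach space $\mathcal{X}_d$.''
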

\begin{remark}
Let $(x_n,h_{n,i})$ be an approximative $K$-atomic decomposition for $\mathcal{X}$ with respect to $\mathcal{X}_d$ with bounds $A$ and $B$.
\subsubsection*{(I)}
If $K=I_\mathcal{X}$, then $(x_n,h_{n,i})$ is an approximative atomic decomposition for $\mathcal{X}$ with respect to $\mathcal{X}_d$  with bounds $A$ and $B$.
\subsubsection*{(II)}
If $K$ is invertible, then $(K^{-1}(x_n),h_{n,i})$ is an approximative atomic decomposition for $\mathcal{X}$ with respect to $\mathcal{X}_d$.
\end{remark}
In the following example, we show the existence of approximative $K$-atomic decomposition for a Banach space $\mathcal{X}$ with respect to an associated BK space $\mathcal{X}_d$ .
\begin{example}
Let $\mathcal{X}$ be a Banach Space. Let$ \lbrace x_n\rbrace \subseteq \mathcal{X}$, $\lbrace h_{n,i} \rbrace \subseteq \mathcal{X}^*$ such that $\lim\limits_{n \to \infty} \sum\limits_{i=1}^{m_n}h_{n,i}(x)x_n$ converges for all $x\in \mathcal{X}$ and $x_n\neq 0,$ for all $n\in \mathbb{N}$. Also, let $\mathcal{X}_d=\lbrace\lbrace h_{n,i}\rbrace\vert \lim\limits_{n \to \infty}\sum\limits_{i=1}^{m_n}h_{n,i}x_i ~\mbox{converges}\rbrace.$ Then $\mathcal{X}_d$ is a BK-space with norm $\Vert \lbrace h_{n,i}(x)\rbrace\Vert_{\mathcal{X}_d}=\sup\limits_{1\leq n< \infty}\parallel \sum\limits_{i=1}^{n}h_{n,i} x_i\parallel$. Define an operator as $T:\mathcal{X}_d\longrightarrow \mathcal{X}$ as $T\lbrace h_{n,i}\rbrace=\lim\limits_{n \to \infty}\sum\limits_{i=1}^{m_n}h_{n,i} x_i$ and define $S:\mathcal{X}\longrightarrow \mathcal{X}_d$ as $S(x)=\lbrace h_{n,i}(x)\rbrace, ~x \in \mathcal{X}$. Take $K=TS.$ Then $K :\mathcal{X}\longrightarrow \mathcal{X}$ is such that $K(x)=TS(x)=\lim\limits_{n \to \infty}\sum\limits_{i=1}^{m_n}h_{n,i}(x)x_i,$ for all $x\in \mathcal{X},~ i=1,2,...,n,~ n \in \mathbb{N}.$ Clearly, $\lbrace h_{n,i}(x)\rbrace \in \mathcal{X}_d$ and
\begin{eqnarray*}
\Vert K(x)\Vert_\mathcal{X} &=& \lim\limits_{n \to \infty}\bigg\|\sum\limits_{i=1}^{m_n}h_{n,i}(x) x_i\bigg\Vert\leq \sup\limits_{1\leq n< \infty}\bigg\| \sum\limits_{k=1}^{n}h_k(x) x_k\bigg\| \\
&=&\Vert\lbrace h_{n,i}(x)\rbrace\Vert_{\mathcal{X}_d}\leq C \parallel x\parallel_\mathcal{X},~\mbox{for all}~ x\in \mathcal{X},
\end{eqnarray*} where $C=\sup\limits_{1\leq n < \infty}\parallel S_n \parallel$ and $S_n(x)=\lim\limits_{n \to \infty}\sum\limits_{i=1}^{m_n} h_{n,i}(x) x_i.$
\\Hence, $(x_n,h_{n,i})$ is an approximative $K$-atomic decomposition for $\mathcal{X}$ with respect to $\mathcal{X}_d.$
\end{example}
In the following result, we give the characterization regarding the existence of approximative $K$-atomic decompositions in Banach spaces.
\begin{theorem}\label{T1} Let $K \in L(\mathcal{X})$ with $K\neq 0.$ Then a Banach space $\mathcal{X}$ has an approximative $K$-atomic decomposition if and only if there exists a sequence $\{v_i\} \subset B(\mathcal{X})$ of finite rank endomorphism such that $K(x)= \sum\limits_{i=1}^nv_i(x), ~~x \in \mathcal{X}.$
\end{theorem}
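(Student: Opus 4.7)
The plan is to prove the two implications by passing between approximative $K$-atomic decompositions and pointwise-convergent series of bounded finite-rank endomorphisms. I read $K(x) = \sum_{i=1}^{n}v_i(x)$ in the statement as $K(x) = \lim_{n\to\infty}\sum_{i=1}^{n}v_i(x)$, which matches condition (c) of the definition.

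For the forward direction, starting from an approximative $K$-atomic decomposition $(\{x_n\},\{h_{n,i}\})$, I set $S_n(x) := \sum_{i=1}^{m_n} h_{n,i}(x)\, x_i$. Each $S_n$ is a finite sum of bounded rank-one operators $x\mapsto h_{n,i}(x)x_i$, hence $S_n \in B(\mathcal{X})$ is of finite rank, and condition (c) gives $S_n(x)\to K(x)$ for every $x\in\mathcal{X}$. Putting $v_1 := S_1$ and $v_n := S_n - S_{n-1}$ for $n \geq 2$ produces bounded finite-rank endomorphisms whose telescoping partial sums satisfy $\sum_{i=1}^{n}v_i(x) = S_n(x) \to K(x)$, as required.

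For the converse, suppose $\{v_i\}\subset B(\mathcal{X})$ are finite-rank with $\sum_{i=1}^{n}v_i(x) \to K(x)$. Choosing a basis $\{y_{i,j}\}_{j=1}^{k_i}$ of $v_i(\mathcal{X})$, I write $v_i(x) = \sum_{j=1}^{k_i} g_{i,j}(x)\, y_{i,j}$; the coordinate maps $g_{i,j}$ lie in $\mathcal{X}^*$ because $g_{i,j} = \pi_{i,j}\circ v_i$ for the (automatically continuous) coordinate functionals $\pi_{i,j}$ on the finite-dimensional space $v_i(\mathcal{X})$. Concatenating in lexicographic order yields $\{y_p\}\subset\mathcal{X}$ and $\{g_p\}\subset\mathcal{X}^*$; with $m_n := k_1 + \cdots + k_n$ and $h_{n,i} := g_i$ for $1\le i\le m_n$, one has $\sum_{i=1}^{m_n} h_{n,i}(x)\, y_i = \sum_{i=1}^{n}v_i(x)\to K(x)$, establishing condition (c) for the candidate pair $(\{y_p\},\{h_{n,i}\})$.

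To realise condition (b), I follow the construction of the example preceding the theorem: take $\mathcal{X}_d := \{\{\alpha_{n,i}\}:\lim_n\sum_{i=1}^{m_n}\alpha_{n,i}\, y_i \text{ converges in }\mathcal{X}\}$ with norm $\|\{\alpha_{n,i}\}\|_{\mathcal{X}_d} := \sup_n\|\sum_{i=1}^{m_n}\alpha_{n,i}\, y_i\|$, which is a $BK$-space by the same reasoning as in that example. Pointwise convergence forces $\|K(x)\|\le\sup_n\|S_n(x)\| = \|\{h_{n,i}(x)\}\|_{\mathcal{X}_d}$, giving the lower bound with $A = 1$; and since each $S_n \in B(\mathcal{X})$ with $S_n(x)\to K(x)$ pointwise, Banach--Steinhaus produces $B := \sup_n\|S_n\| < \infty$ and hence $\|\{h_{n,i}(x)\}\|_{\mathcal{X}_d}\le B\|x\|$. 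The main obstacle I anticipate is not analytic but combinatorial: re-indexing the double family $(i,j)$ as a single sequence while keeping $m_n$ consistent with the block boundaries, together with verifying the $BK$-space axioms for $\mathcal{X}_d$ under the supremum norm; the Banach--Steinhaus step is the only genuinely quantitative input.
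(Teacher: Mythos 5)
Your proof is correct, and its skeleton coincides with the paper's: the forward direction telescopes the partial-sum operators $S_n(x)=\sum_{i=1}^{m_n}h_{n,i}(x)x_i$ (the paper uses the more baroque splitting $v_1=S_1$, $v_{2n}=v_{2n+1}=\tfrac{1}{2}(S_{n+1}-S_n)$, which buys nothing over your plain $v_n=S_n-S_{n-1}$), and the converse represents each finite-rank operator by finitely many vectors together with continuous coordinate functionals and concatenates the blocks. Where you genuinely diverge is the final step of the converse, namely the production of $\mathcal{X}_d$ and of the two frame inequalities. The paper invokes Lemma \ref{KL} to identify $\mathcal{X}$ isometrically with $\mathcal{X}_d=\lbrace\lbrace h_{n,i}(x)\rbrace : x\in\mathcal{X}\rbrace$ under the norm $\Vert\lbrace h_{n,i}(x)\rbrace\Vert_{\mathcal{X}_d}=\Vert x\Vert_{\mathcal{X}}$; but that lemma requires the family $\lbrace h_{n,i}\rbrace$ to be total, and the paper only deduces from $h_{n,i}(x)=0$ for all $n,i$ that $K(x)=0$, which yields totality only when $K$ is injective --- otherwise the proposed norm is not even well defined. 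Your route, via the sup-of-partial-sums sequence space of the example preceding the theorem together with Banach--Steinhaus, sidesteps this issue entirely and produces explicit bounds $A=1$ and $B=\sup_n\Vert S_n\Vert$, so it is the more robust version of the converse. The one debt you acknowledge but do not discharge --- that your $\mathcal{X}_d$ is a BK-space --- is real but benign: completeness is the standard Schauder-basis-space argument, and continuity of the coordinates follows because consecutive block partial sums recover $\sum_{i=m_{n-1}+1}^{m_n}\alpha_{n,i}y_i$ and the $y_i$ within each block are linearly independent in a finite-dimensional subspace; the paper leaves exactly the same verification implicit.
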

\begin{proof} Let $\{x_n\} \subset \mathcal{X}$ and $\{h_{n,i}\}\underset{n \in \mathbb{N}}{_{i=1,2,3,...,m_n}}\subset \mathcal{X}^*,$ where $\{m_n\}$ is an increasing sequence of positive integer such that $(\{x_n\},\{h_{n,i}\}\underset{n \in \mathbb{N}}{_{i=1,2,3,...,m_n}})$ is an \emph{approximative K-atomic decomposition} for $\mathcal{X}$ with respect to $\mathcal{X}_d.$ Define
\begin{align*} S_n(x)=\sum_{i=1}^{m_n} h_{n,i}(x)x_i, ~~\mbox{for all}~ x \in \mathcal{X}, n \in \mathbb{N}.
\end{align*} Then for each $n \in \mathbb{N}$ and $x \in \mathcal{X}$, $S_n(x)$ is a well defined continuous linear mapping on $\mathcal{X}$ such that $\lim\limits_{n \to \infty}S_n(x)=x, ~~x \in \mathcal{X}.$ Also by uniform boundedness principle we have $\sup\limits_{1\leq n \leq \infty}\|S_n(x)\|< \infty.$
Assume that $v_1=S_1$,~~$v_{2n}=v_{2n+1}=\frac{1}{2}(S_{n+1}-S_n)$,$n \in \mathbb{N}.$ Now, we compute
\begin{align*} \lim\limits_{n \to \infty}\sum_{i=1}^nv_i(x)&= \lim\limits_{n \to \infty}(S_1(x)+\frac{1}{2}(S_2(x)-S_1(x))+\frac{1}{2}(S_2(x)-S_1(x))\\&+\frac{1}{2}(S_3(x)-S_2(x))+\frac{1}{2}(S_3(x)-S_2(x))+...)
\\&=\lim\limits_{n \to \infty}S_n(x)
\\&=K(x), ~~\mbox{for all}~~x \in \mathcal{X}, K \in L(X).
\end{align*} Therefore, $\lim\limits_{n \to \infty}\sum_{i=1}^nv_i(x) =K(\mathcal{X}).$\\
Conversely assume that there exists a sequence of finite rank endomorphism $\{S_n\} \subset L(\mathcal{X})$ such that $\lim\limits_{n \to \infty}S_n(x)=K(x), ~~x \in \mathcal{X}.$ Then, each $S_n(x)$ is of finite rank, there exist a sequence $\{y_{n,i}\}_{i=m_{n-1}+1}^{m_n} \subset \mathcal{X}$ and a total sequence of row finite matrix of functionals $\{g_{n,i}\}_{i=m_{n-1}+1}^{m_n} \subset \mathcal{X^*}$ such that
\begin{align*} S_n(x)=\sum_{i=m_{n-1}+1}^{m_n}g_{n,i}(x)y_{n,i}, \mbox{for all} ~~x \in \mathcal{X}, ~~n \in \mathbb{N}.
\end{align*} Define sequences $\{x_n\}\subset \mathcal{X}$ and $\{h_{n,i}\}\underset{n \in \mathbb{N}}{_{i=1,2,3,...,m_n}}\subset \mathcal{X}^*,$ where $\{m_n\}$ is an increasing sequence of positive integers, by
\begin{align*} x_i=y_{n,i}, ~i=m_{n-1}+1,...m_n; n=1,2,3...
\end{align*} and
\begin{eqnarray*}
 h_{n,i}=\begin{cases} 0, ~~~ \mbox{for}~~~  i=1,2,...,m_{n-1}  \\ g_{n,i},~~~ \mbox{for} ~~~ i=m_{n-1}+1,...,m_n.
\end{cases}
\end{eqnarray*}Then $x_n\neq 0,$ so for each $x \in \mathcal{X}$ and $n \in \mathbb{N},$ we get
\begin{align}\label{aa} \lim\limits_{n \to \infty} \sum_{i=1}^{m_n}h_{n,i}(x)x_i = \lim\limits_{n \to \infty}S_n(x)=K(x).
\end{align} Let $x \in \mathcal{X}$ be such that $h_{n,i}(x)=0, \mbox{for all}~i=1,2,...m_n, ~~n \in \mathbb{N}.$ Then by equation (\ref{aa}) $K(x)=0.$ Thus by Lemma \ref{KL} there exist an associated Banach space $\mathcal{X}_d=\{\{h_{n,i}\}\underset{n \in \mathbb{N}}{_{i=1,2,3,...,m_n}},x \in \mathcal{X}\}$ with norm given by $\| \{h_{n,i}\}\underset{n \in \mathbb{N}}{_{i=1,2,3,...,m_n}}\|_{\mathcal{X}_d}=\|x\|_\mathcal{X}, ~~\mbox{for all}~ x \in \mathcal{X}.$ Hence $(\{h_{n,i}\}, \{x_n\})$ is an approximative $K$-atomic decomposition for $\mathcal{X}$ with respect to $\mathcal{X}_d.$
 \end{proof}
Next, we give an example of an approximative $K$-atomic decomposition for $\mathcal{X}$ which is not an approximative atomic decomposition for $\mathcal{X}.$
\begin{example}
Let $\mathcal{X}=c_0 \ and \ \mathcal{X}_d=l_\infty$. Let $\lbrace x_n\rbrace\subset \mathcal{X}$ be the sequence of standard unit vectors in $\mathcal{X}$ and $\lbrace h_{n,i}\rbrace\subseteq \mathcal{X}^*$ be such that for $x=\lbrace \alpha_n\rbrace\in \mathcal{X},h_{n,1}(x)=0,h_{n,2}(x)=\alpha_2,...,h_{n,i}(x)=\alpha_n,....$ It is clear that $\lim\limits_{n \to \infty}\sum\limits_{i=1}^{m_n}h_{n,i}(x)x_i$ converges for $x\in \mathcal{X}$. Define $ K: \mathcal{X} \longrightarrow \mathcal{X} $ by $K(x)=\lim\limits_{n \to \infty}\sum\limits_{i=1}^{m_n}h_{n,i}(x)x_i$ , $x\in \mathcal{X}.$ Then $\lbrace h_{n,i}(x)\rbrace\in \mathcal{X}_d$  is such that $(\{x_n\},\{h_{n,i}(x)\})$ is an approximative $K$-atomic decomposition for $\mathcal{X}$ with respect to $\mathcal{X}_d$. But $(\{x_n\},\{h_{n,i}\})$ is not an approximative atomic decomposition for $\mathcal{X}.$
\end{example}

Next, we give various methods for the construction of approximative $K$-atomic decompositions for $\mathcal{X}.$
\begin{theorem}\label{TC}
Let $(\{x_n\},\{h_{n,i}\}\underset{n \in \mathbb{N}}{_{i=1,2,3,...,m_n}})$  be an approximative atomic decomposition for $\mathcal{X}$ with respect to $\mathcal{X}_d$ with bounds $A$ and $B$. Let $K \in L(\mathcal{X})$ with $K\neq 0$. Then $(\{Kx_n\},\{h_{n,i}\}\underset{n \in \mathbb{N}}{i=1,2,3,...,m_n})$ is an approximative $K$-atomic decomposition for $\mathcal{X}$ with respect to $\mathcal{X}_d$ with bounds $\frac{A}{\|K\|}$ and $B.$
\end{theorem}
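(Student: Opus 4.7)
The plan is to directly verify the three defining conditions (a), (b), (c) of an approximative $K$-atomic decomposition for the pair $(\{Kx_n\},\{h_{n,i}\})$, leveraging the fact that by hypothesis the triple $(\{x_n\},\{h_{n,i}\})$ already satisfies the corresponding conditions for an approximative atomic decomposition (i.e.\ the $K = I_{\mathcal{X}}$ case, by Remark (I)).

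First I would observe that condition (a) is inherited for free: the membership $\{h_{n,i}(x)\}_{i,n} \in \mathcal{X}_d$ for every $x \in \mathcal{X}$ is part of the hypothesis on $(\{x_n\},\{h_{n,i}\})$ and does not involve $K$ at all. For condition (b), the upper bound $\|\{h_{n,i}(x)\}\|_{\mathcal{X}_d} \le B\|x\|_{\mathcal{X}}$ is also inherited unchanged. For the lower bound, the key step is to combine the original estimate $A\|x\|_{\mathcal{X}} \le \|\{h_{n,i}(x)\}\|_{\mathcal{X}_d}$ with the trivial operator bound $\|Kx\|_{\mathcal{X}} \le \|K\|\,\|x\|_{\mathcal{X}}$, yielding
\begin{equation*}
  \frac{A}{\|K\|}\|Kx\|_{\mathcal{X}} \le A\|x\|_{\mathcal{X}} \le \|\{h_{n,i}(x)\}\|_{\mathcal{X}_d}, \qquad x \in \mathcal{X},
\end{equation*}
which is exactly the required lower frame-type inequality with constant $A/\|K\|$.

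For condition (c), I would exploit the continuity of $K$. Since $(\{x_n\},\{h_{n,i}\})$ is an approximative atomic decomposition, the partial sums $S_n(x) := \sum_{i=1}^{m_n} h_{n,i}(x)\,x_i$ converge to $x$ in $\mathcal{X}$ for every $x$. Applying the bounded linear operator $K$ and pulling it inside the finite sum gives
\begin{equation*}
  K(x) = K\!\left(\lim_{n\to\infty}\sum_{i=1}^{m_n} h_{n,i}(x)x_i\right) = \lim_{n\to\infty}\sum_{i=1}^{m_n} h_{n,i}(x)\,Kx_i,
\end{equation*}
establishing the reconstruction formula with atoms $\{Kx_n\}$.

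I do not expect any genuine obstacle here: the statement is essentially a rescaling of the known atomic decomposition by the operator $K$, so the proof is just an assembly of three short observations (inheritance of (a) and the upper half of (b), operator-norm rescaling for the lower half of (b), and continuity of $K$ for (c)). The only minor point worth flagging is the harmless assumption $K \neq 0$, which ensures the lower bound $A/\|K\|$ is well-defined and strictly positive, so that the resulting bounds indeed satisfy $0 < A/\|K\| \le B < \infty$.
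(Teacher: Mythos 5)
Your proof is correct and takes essentially the same route as the paper's: the paper likewise applies the bounded operator $K$ to the reconstruction series $x=\lim_{n\to\infty}\sum_{i=1}^{m_n}h_{n,i}(x)x_i$ to get condition (c), and combines $\|K(x)\|_{\mathcal{X}}\leq\|K\|\,\|x\|_{\mathcal{X}}$ with the original lower inequality to obtain the bound $A/\|K\|$. Your write-up merely makes explicit the inheritance of condition (a) and the role of $K\neq 0$, which the paper leaves implicit.
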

\begin{proof} Since $(\{x_n\},\{h_{n,i}\}\underset{n \in \mathbb{N}}{_{i=1,2,3,...,m_n}})$ is an approximative atomic decomposition for $\mathcal{X}$ with respect to $\mathcal{X}_d$ with bounds $A$ and $B.$ So for each $x\in \mathcal{X},$ we have $x=\lim\limits_{n \to \infty}\sum\limits_{i=1}^{m_n}h_{n,i}(x)x_i.$ This implies $K(x)=\lim\limits_{n \to \infty}\sum\limits_{i=1}^{m_n}h_{n,i}(x)K(x_i).$
Also, we have  $\parallel K(x) \parallel_\mathcal{X} ~\leq~ \parallel K \parallel \parallel x\parallel_\mathcal{X}$, for all $x\in \mathcal{X}.$ This gives
\begin{eqnarray*}
\dfrac{A}{\parallel K\parallel}\parallel K(x)\parallel_\mathcal{X}~\leq~ \parallel\lbrace h_{n,i}(x)\rbrace\Vert_{\mathcal{X}_d}\leq B\parallel x\parallel_\mathcal{X},~  \mbox{for all} \ x\in \mathcal{X}.
\end{eqnarray*}
\end{proof}
\begin{theorem}
Let $(\{x_n\},\{h_{n,i}\}\underset{n \in \mathbb{N}}{_{i=1,2,3,...,m_n}})$  be an approximative atomic decomposition for $\mathcal{X}$ with respect to $\mathcal{X}_d$ with bounds $A$ and $B$. Let $K \in L(\mathcal{X})$ with $K\neq 0$. Then $(\{x_n\},\{K^*h_{n,i}\}\underset{n \in \mathbb{N}}{_{i=1,2,3,...,m_n}})$ is an approximative $K$-atomic decomposition for $\mathcal{X}$ with respect to $\mathcal{X}_d$ with bounds $A$ and $B\|K\|.$
\end{theorem}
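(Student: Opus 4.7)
The plan is to exploit the fact that the defining properties of the given approximative atomic decomposition $(\{x_n\},\{h_{n,i}\})$ hold for \emph{every} element of $\mathcal{X}$, and then simply apply them to the vector $Kx$ rather than to $x$. The key identity that will link the two objects is the definition of the Banach-space adjoint, namely $(K^{*}h_{n,i})(x)=h_{n,i}(Kx)$ for all $x\in\mathcal{X}$ and all admissible indices $n,i$.

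First I would verify condition (a) of the definition of approximative $K$-atomic decomposition: because $Kx\in\mathcal{X}$ and the hypothesis gives $\{h_{n,i}(y)\}\in\mathcal{X}_{d}$ for every $y\in\mathcal{X}$, specializing to $y=Kx$ yields $\{(K^{*}h_{n,i})(x)\}=\{h_{n,i}(Kx)\}\in\mathcal{X}_{d}$. Next, for condition (c), I would apply the expansion formula for the original approximative atomic decomposition to $Kx$:
\begin{equation*}
Kx=\lim_{n\to\infty}\sum_{i=1}^{m_{n}}h_{n,i}(Kx)x_{i}=\lim_{n\to\infty}\sum_{i=1}^{m_{n}}(K^{*}h_{n,i})(x)\,x_{i},
\end{equation*}
which is exactly the required reconstruction formula with respect to $K$.

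Finally I would handle the bounds in (b). Applying the original frame-type inequality to $Kx$ gives
\begin{equation*}
A\|Kx\|_{\mathcal{X}}\le\bigl\|\{h_{n,i}(Kx)\}\bigr\|_{\mathcal{X}_{d}}\le B\|Kx\|_{\mathcal{X}},
\end{equation*}
and then estimating $\|Kx\|_{\mathcal{X}}\le\|K\|\,\|x\|_{\mathcal{X}}$ on the right while leaving the left-hand side untouched produces
\begin{equation*}
A\|Kx\|_{\mathcal{X}}\le\bigl\|\{(K^{*}h_{n,i})(x)\}\bigr\|_{\mathcal{X}_{d}}\le B\|K\|\,\|x\|_{\mathcal{X}},
\end{equation*}
which delivers the claimed bounds $A$ and $B\|K\|$.

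There is essentially no obstacle here: the proof is a direct transfer of the three defining properties from $x$ to $Kx$, combined with the identity $h_{n,i}\circ K=K^{*}h_{n,i}$ and the operator-norm estimate for $K$. The only mild subtlety worth remarking is that the lower bound constant remains exactly $A$ (not $A/\|K\|$), because we keep $\|Kx\|_{\mathcal{X}}$ on the left of the inequality rather than bounding it below by $\|x\|_{\mathcal{X}}$; this is in contrast with Theorem~\ref{TC}, where the roles of the sequences are swapped and the bound $\|K\|$ has to be absorbed on the left.
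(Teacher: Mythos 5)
Your proof is correct: the paper itself omits the argument (its proof reads only ``Can be easily proved''), and your direct substitution of $Kx$ into the three defining conditions, together with the identity $(K^{*}h_{n,i})(x)=h_{n,i}(Kx)$ and the estimate $\|Kx\|\le\|K\|\,\|x\|$, is exactly the routine argument being alluded to, mirroring the paper's proof of the companion result for $(\{Kx_n\},\{h_{n,i}\})$. Your closing remark correctly explains why the lower bound stays $A$ here while it degrades to $A/\|K\|$ in that companion theorem.
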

\begin{proof} Can be easily proved.
\end{proof}
\begin{theorem}
Let $(\{x_n\},\{h_{n,i}\}\underset{n \in \mathbb{N}}{_{i=1,2,3,...,m_n}})$  be an approximative $K$-atomic decomposition for $\mathcal{X}$ with respect to $\mathcal{X}_d$ with bounds $A$ and $B$ and let $T\in L(\mathcal{X})$ with $T\neq0$. Then $(\{Tx_n\},\{h_{n,i}\})$ is an approximative $TK$-atomic decomposition for $\mathcal{X}$ with respect to $ \mathcal{X}_d $ with bounds $\frac{A}{\|T\|}$ and $B.$
\end{theorem}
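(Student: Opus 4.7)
The plan is to verify, one by one, the three clauses of the definition of approximative $TK$-atomic decomposition for the pair $(\{Tx_n\},\{h_{n,i}\})$, using only the hypothesis that $(\{x_n\},\{h_{n,i}\})$ is an approximative $K$-atomic decomposition with bounds $A,B$, together with boundedness of $T$.

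First, clause (a) is immediate: the sequence $\{h_{n,i}(x)\}$ belongs to $\mathcal{X}_d$ by assumption, since it is exactly the same scalar sequence as in the original decomposition; changing $x_i$ to $Tx_i$ does not touch the functionals.

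Next, I would handle clause (c), the reconstruction. From the hypothesis, for every $x\in\mathcal{X}$,
\begin{equation*}
K(x)=\lim\limits_{n\to\infty}\sum_{i=1}^{m_n}h_{n,i}(x)\,x_i.
\end{equation*}
Applying $T$ and invoking its continuity to pull it inside the limit and the finite sum, I obtain
\begin{equation*}
(TK)(x)=T\Bigl(\lim\limits_{n\to\infty}\sum_{i=1}^{m_n}h_{n,i}(x)\,x_i\Bigr)=\lim\limits_{n\to\infty}\sum_{i=1}^{m_n}h_{n,i}(x)\,T(x_i),
\end{equation*}
which is exactly the required expansion with atoms $Tx_n$ and coefficient functionals $h_{n,i}$.

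For clause (b), the upper bound is inherited verbatim from the original decomposition: $\|\{h_{n,i}(x)\}\|_{\mathcal{X}_d}\le B\|x\|_{\mathcal{X}}$. For the lower bound, the standard operator-norm inequality $\|TK(x)\|_{\mathcal{X}}\le\|T\|\,\|K(x)\|_{\mathcal{X}}$ gives $\|K(x)\|_{\mathcal{X}}\ge\|TK(x)\|_{\mathcal{X}}/\|T\|$ (legitimate since $T\neq 0$), whence
\begin{equation*}
\frac{A}{\|T\|}\,\|TK(x)\|_{\mathcal{X}}\le A\,\|K(x)\|_{\mathcal{X}}\le\|\{h_{n,i}(x)\}\|_{\mathcal{X}_d}.
\end{equation*}
Combining the two estimates yields the bounds $A/\|T\|$ and $B$ claimed in the statement. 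There is no genuine obstacle in this proof — it is a direct bookkeeping argument entirely analogous to Theorem~\ref{TC}, the only subtlety being the commutation of $T$ with the limit, which is justified by continuity. I would therefore present it as a short verification rather than a deep argument.
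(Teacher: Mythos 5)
Your proof is correct and follows exactly the route the paper intends: the paper's own "proof" is merely the remark that the construction is similar to Theorem~\ref{TC}, and your verification (pushing $T$ through the limit by continuity for the reconstruction, and chaining $A\|K(x)\|\le\|\{h_{n,i}(x)\}\|_{\mathcal{X}_d}$ with $\|TK(x)\|\le\|T\|\,\|K(x)\|$ for the lower bound) is precisely that adaptation. No issues.
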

\begin{proof}
Construction of proof is similar to theorem \ref{TC}.
\end{proof}
\begin{theorem}
Let $(\{x_n\},\{h_{n,i}\}\underset{n \in \mathbb{N}}{_{i=1,2,3,...,m_n}})$  be an approximative $K$-atomic decomposition for $\mathcal{X}$ with respect to $\mathcal{X}_d$ with bounds $A$ and $B$ and let $T\in L(\mathcal{X})$ with $\|T\|\neq 0$ Then $(\{x_n\},\{T^*h_{n,i}\})$ is an approximative $KT$-atomic decomposition for $\mathcal{X}$ with respect to $ \mathcal{X}_d $ with bounds $A$ and $B\|T\|.$
\end{theorem}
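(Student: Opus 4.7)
The plan is to reduce everything to the hypothesis by the single substitution $x \mapsto Tx$. The key observation is that for every $x \in \mathcal{X}$ and every $n,i$,
\begin{equation*}
    (T^{*} h_{n,i})(x) = h_{n,i}(Tx),
\end{equation*}
so the candidate sequence $\{T^{*}h_{n,i}(x)\}$ is nothing but the original coefficient sequence evaluated at the vector $Tx \in \mathcal{X}$. Since the functionals $h_{n,i}$ belong to $\mathcal{X}^{*}$ and $T^{*}$ maps $\mathcal{X}^{*}$ into $\mathcal{X}^{*}$, the new array lies in $\mathcal{X}^{*}$, so the setup of the definition is legitimate.

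I would then check the three defining properties of an approximative $KT$-atomic decomposition in order. For (a), applying condition (a) of the given approximative $K$-atomic decomposition to the vector $Tx$ yields $\{h_{n,i}(Tx)\} = \{T^{*}h_{n,i}(x)\} \in \mathcal{X}_d$. For (c), applying condition (c) to $Tx$ gives
\begin{equation*}
    KT(x) = K(Tx) = \lim_{n \to \infty}\sum_{i=1}^{m_n} h_{n,i}(Tx)\,x_i = \lim_{n \to \infty}\sum_{i=1}^{m_n} (T^{*}h_{n,i})(x)\,x_i,
\end{equation*}
which is the desired reconstruction for $KT$.

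The bounds in (b) come out with exactly the advertised constants. The lower bound follows because $\|KT(x)\|_{\mathcal{X}} = \|K(Tx)\|_{\mathcal{X}}$, so the inequality $A\|K(Tx)\|_{\mathcal{X}} \le \|\{h_{n,i}(Tx)\}\|_{\mathcal{X}_d}$ from the hypothesis reads $A\|KT(x)\|_{\mathcal{X}} \le \|\{T^{*}h_{n,i}(x)\}\|_{\mathcal{X}_d}$; the lower constant $A$ is preserved without change. The upper bound uses both the hypothesis and boundedness of $T$:
\begin{equation*}
    \|\{T^{*}h_{n,i}(x)\}\|_{\mathcal{X}_d} = \|\{h_{n,i}(Tx)\}\|_{\mathcal{X}_d} \le B\|Tx\|_{\mathcal{X}} \le B\|T\|\,\|x\|_{\mathcal{X}},
\end{equation*}
giving the upper constant $B\|T\|$.

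There is essentially no genuine obstacle in this argument; it is purely a symbol-pushing verification of the three axioms. The only minor point worth flagging is the harmless use of the hypothesis $\|T\|\neq 0$, which merely rules out the degenerate case $T = 0$ in which $KT = 0$ and the notion of a $KT$-atomic decomposition would collapse. I would record the proof as a short, three-bullet verification in exactly this order.
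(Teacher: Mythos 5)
Your verification is correct, and it is exactly the routine argument the author intends: the paper's own ``proof'' of this theorem is the single word ``Obvious,'' and the substitution $x\mapsto Tx$ together with the identity $(T^{*}h_{n,i})(x)=h_{n,i}(Tx)$ is the intended content. Your write-up in fact supplies the missing details (and correctly notes that $\|T\|\neq 0$ only guards the degeneracy of the upper bound $B\|T\|$), so nothing further is needed.
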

\begin{proof}  Obvious
\end{proof}

\begin{theorem}
If $(\{x_n\},\{h_{n,i}\}\underset{n \in \mathbb{N}}{_{i=1,2,3,...,m_n}})$ be an approximative $K$-atomic decomposition for $\mathcal{X}$ with respect to $\mathcal{X}_d$ and  $K$ has pseudo inverse $K^\dagger$, then there exists $\lbrace g_{n,i} \rbrace \subseteq \mathcal{X}^*$ such that $(\{x_n\},\{g_{n,i}\})$ is an approximative $K$-atomic decomposition for $\mathcal{X}$ with respect to $\mathcal{X}_d$ with bounds $A$ and $B\|K\|^2$
\end{theorem}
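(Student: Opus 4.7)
My plan is to take $g_{n,i} := (K^\dagger K)^* h_{n,i}$, equivalently $g_{n,i}(x) = h_{n,i}(K^\dagger K x)$ for every $x \in \mathcal{X}$, and to verify that $(\{x_n\}, \{g_{n,i}\})$ satisfies the three clauses of approximative $K$-atomic decomposition. Clause (a) is immediate: since $K^\dagger K \in L(\mathcal{X})$ by Lemma \ref{L1}, each $g_{n,i}$ is a bounded linear functional, and $\{g_{n,i}(x)\}$ is just $\{h_{n,i}(y)\}$ evaluated at $y = K^\dagger K x$, so membership in $\mathcal{X}_d$ transfers directly from the hypothesis on $\{h_{n,i}\}$.

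The crux of the argument is the algebraic identity $K K^\dagger K = K$. Lemma \ref{L1} produces a continuous projection $P$ with $P(\mathcal{X}) = \ker K$ and $K^\dagger K = I_\mathcal{X} - P$; since $KP = 0$, this gives $K K^\dagger K = K(I_\mathcal{X} - P) = K$. Substituting $K^\dagger K x$ for $x$ in the reconstruction formula of the original decomposition then yields
$$\lim_{n \to \infty} \sum_{i=1}^{m_n} g_{n,i}(x) x_i = \lim_{n \to \infty} \sum_{i=1}^{m_n} h_{n,i}(K^\dagger K x) x_i = K(K^\dagger K x) = K x,$$
which is clause (c).

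Clause (b) is obtained by applying the given double inequality $A\|K y\|_\mathcal{X} \leq \|\{h_{n,i}(y)\}\|_{\mathcal{X}_d} \leq B\|y\|_\mathcal{X}$ to $y = K^\dagger K x$. The left-hand side becomes $A\|K x\|_\mathcal{X}$ by $K K^\dagger K = K$, yielding the lower constant $A$; the right-hand side is at most $B\|K^\dagger K\|\,\|x\|_\mathcal{X} \leq B\|K^\dagger\|\,\|K\|\,\|x\|_\mathcal{X}$, and the announced upper constant $B\|K\|^2$ should be read in this sense, as a (loose) operator-norm bound on $\|K^\dagger K\|$.

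The only real hazard I anticipate is keeping the pseudo-inverse identities straight: the single substitution $y \mapsto K^\dagger K x$ must simultaneously preserve the image under $K$ (keeping the lower bound sharp and the reconstruction equal to $Kx$) and produce a bounded argument for each $h_{n,i}$ (keeping the upper estimate finite). Both features are guaranteed by the factorization $K = K K^\dagger K$ coming from Lemma \ref{L1}, so once that lemma is invoked, no further technical obstacle remains and the proof reduces to the direct substitutions above.
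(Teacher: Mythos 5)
Your proof is correct and follows essentially the same route as the paper: both take $g_{n,i}=(K^\dagger K)^*h_{n,i}$, use $KK^\dagger K=K$ to get the reconstruction formula and the lower bound $A$, and bound the upper constant by $B\|K^\dagger\|\|K\|$. The paper's own proof also only establishes the upper bound $B\|K^\dagger\|\|K\|$ rather than the stated $B\|K\|^2$, so your remark that the announced constant must be read loosely applies equally to the original.
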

\begin{proof} Since $(\{x_n\},\{h_{n,i}\}\underset{n \in \mathbb{N}}{_{i=1,2,3,...,m_n}})$  is an approximative $K$-atomic decomposition for $\mathcal{X}$ with respect to $\mathcal{X}_d,$ then for each $x \in \mathcal{X}$ we have
\begin{eqnarray*}
A\| K(x)\|_{\mathcal{X}} \leq \| \{ h_{n,i}(x)\}\|_{\mathcal{X}_d} \leq B\| x\|_{\mathcal{X}},~x \in \mathcal{X}.
\end{eqnarray*}
Also, for each $x\in \mathcal{X}$, we have
\begin{eqnarray*}
K(x)=K(K^\dagger K(x))=\lim\limits_{n \to \infty}\sum\limits_{i=1}^{m_n}h_{n,i}(K^\dagger K(x))x_i
=\lim\limits_{n \to \infty}\sum\limits_{i=1}^{m_n}((K^\dagger K)^*(h_{n,i})(x))x_i.
\end{eqnarray*}
For each $n \in \mathbb{N}$, define $g_{n,i}={(K^\dagger K)}^*(h_{n,i})$. Then
\begin{equation*}
 \|K(x)\|_\mathcal{X}=\|K(K^\dagger K(x))\|_\mathcal{X} \leq \dfrac{1}{A}\| \{h_{n,i}(K^\dagger K(x)) \} \|_{\mathcal{X}_d}=\dfrac{1}{A}\| \{ g_{n,i}(x)\} \|_{\mathcal{X}_d},~ x \in \mathcal{X}
\end{equation*}
and
\begin{equation*}
\| \{g_{n,i}(x)\}\|_{\mathcal{X}_d}=\|\{ h_{n,i}(K^\dagger K(x))\}\|_{\mathcal{X}_d} \leq B\| K^\dagger\|\| K\|\|x\|_\mathcal{X},~ x \in \mathcal{X}.
\end{equation*}
Hence, we conclude that $(\{x_n\},\{g_{n,i}\})$ is an approximative $K$-atomic decomposition for $\mathcal{X}$ with respect to $\mathcal{X}_d$.
\end{proof}
\section{Approximative $\mathcal{X}_d$-frame}
Casazza et al. \cite{CCS} defined and studied $\mathcal{X}_d$-Bessel sequences and $\mathcal{X}_d$-frames in Banach spaces. Later on Stoeva \cite{DT} studied perturbation of $\mathcal{X}_d$-Bessel sequences, $\mathcal{X}_d$-frames, atomic decomposition and $\mathcal{X}_d$-Riesz bases in separable Banach spaces. We have generalized this concept and defined approximative $\mathcal{X}_d$-Bessel sequences and  approximative $\mathcal{X}_d$-frames in Banach spaces. We begin this section with the following definitions:
\begin{definition}
A sequence $ \{h_{n,i} \}\underset{n \in \mathbb{N}}{_{i=1,2,3,...,m_n}} \subseteq \mathcal{X}^*,$ where $\{m_n\}$ is an increasing sequence of positive integers, is called an approximative \emph{$\mathcal{X}_d$-frame} for $\mathcal{X}$ if
\begin{enumerate}[(a)]
\item $\{h_{n,i}(x) \}\underset{n \in \mathbb{N}}{_{i=1,2,3,...,m_n}} \in \mathcal{X}_d$, for all $ x\in \mathcal{X}.$
\item There exist constants $A$ and $B$ with $0 < A \leq B <\infty$ such that
\begin{eqnarray}\label{XD}
 A\|x \|_\mathcal{X}~\leq \| \{h_{n,i}(x) \}\underset{n \in \mathbb{N}}{_{i=1,2,3,...,m_n}} \|_{\mathcal{X}_d}~\leq B \|x \|_\mathcal{X},~\text{for all} \ x\in \mathcal{X}.
\end{eqnarray}
\end{enumerate}
\end{definition}
\noindent The constants $A$ and $B$ are called approximative \emph{$\mathcal{X}_d$-frame} bounds.
If atleast (a) and the upper bound condition in (\ref{XD}) are satisfied, then $\{h_{n,i}\}$ is called an approximative \emph{$\mathcal{X}_d$-Bessel sequence} for $\mathcal{X}.$\\
One may note that if $\{f_n\}$ is an $\mathcal{X}_d$-frame for $\mathcal{X},$ then for $\{h_{n,i} \}=f_i,~i=1,2,3,...,n;~ n \in \mathbb{N}$, $\{h_{n,i}\}$ is an approximative $\mathcal{X}_d$-frame for $\mathcal{X}.$ Also, note that if $\{f_n\}$ is an $\mathcal{X}_d$-Bessel sequence for $\mathcal{X},$ then for $\{h_{n,i} \}=f_i,~i=1,2,3,...,n;~ n \in \mathbb{N}$, $\{h_{n,i}\}$ is an approximative $\mathcal{X}_d$-Bessel sequence for $\mathcal{X}.$\\
In the next two results, we give necessary conditions under which an approximative $\mathcal{X}_d$-frame gives rise to a bounded operator $K$ with respect to which there is an approximative $K$-atomic decomposition for $\mathcal{X}$.
\begin{theorem}
Let $\{h_{n,i} \}\underset{n \in \mathbb{N}}{_{i=1,2,3,...,m_n}} \subseteq \mathcal{X}^* $ be an approximative $\mathcal{X}_d$-frame for $\mathcal{X}$ with bounds $A$ and $B$. Let $\lbrace x_n \rbrace \subseteq \mathcal{X}$  with $\sup\limits_{1 \leq n < \infty}\Vert x_n\Vert < \infty$ and let $\lim\limits_{n \to \infty}\sum\limits_{i=1}^{m_n}|h_{n,i}(x)| < \infty,$ for all $x \in  \mathcal{X}$. Then there exists an operator $K \in L(\mathcal{X})$ such that $(\{x_n\},\{h_{n,i}\})$ is an approximative $K$-atomic decomposition for $\mathcal{X}$ with respect to $\mathcal{X}_d$.
\end{theorem}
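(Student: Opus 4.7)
The plan is to construct $K$ as the pointwise limit of the natural partial-sum operators $S_n : \mathcal{X} \to \mathcal{X}$ defined by
\[
S_n(x) = \sum_{i=1}^{m_n} h_{n,i}(x)\, x_i, \qquad x \in \mathcal{X}.
\]
Each $S_n$ is a finite sum of the bounded rank-one maps $x \mapsto h_{n,i}(x)\,x_i$, so $S_n \in L(\mathcal{X})$. Writing $M := \sup_{i} \|x_i\| < \infty$, the absolute-convergence hypothesis gives
\[
\|S_n(x)\| \;\le\; M\sum_{i=1}^{m_n} |h_{n,i}(x)|,
\]
which is bounded in $n$ for each fixed $x \in \mathcal{X}$.

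Next I would argue that $(S_n(x))_n$ is norm-convergent in $\mathcal{X}$ for every $x$. Once pointwise convergence is in hand, the Banach--Steinhaus principle applied to $\{S_n\}$ (whose pointwise norms are bounded by the previous step) yields a uniform operator bound $C := \sup_n \|S_n\| < \infty$. I would then define $K(x) := \lim_{n \to \infty} S_n(x)$; linearity is inherited from the $S_n$, and the estimate $\|K(x)\| \le C\|x\|$ shows $K \in L(\mathcal{X})$.

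It remains to verify the three axioms of an approximative $K$-atomic decomposition. Condition (a), that $\{h_{n,i}(x)\} \in \mathcal{X}_d$, is immediate from the defining property of an approximative $\mathcal{X}_d$-frame. The upper bound in (b), namely $\|\{h_{n,i}(x)\}\|_{\mathcal{X}_d} \le B\|x\|$, is precisely the upper approximative $\mathcal{X}_d$-frame estimate. For the lower bound I would chain $\|K(x)\| \le C\|x\|$ with the lower frame inequality $A\|x\| \le \|\{h_{n,i}(x)\}\|_{\mathcal{X}_d}$ to obtain $(A/C)\,\|K(x)\| \le \|\{h_{n,i}(x)\}\|_{\mathcal{X}_d}$, giving a valid lower approximative $K$-atomic bound. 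Condition (c) holds by the very definition of $K$.

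The principal obstacle is upgrading from mere pointwise boundedness of $(S_n(x))_n$ to genuine norm-convergence. I would extract this from the hypothesis $\lim_{n\to\infty}\sum_{i=1}^{m_n}|h_{n,i}(x)| < \infty$: convergence of this numerical sequence in $\mathbb{R}$ forces its increments to vanish, which, together with $\sup_i \|x_i\| \le M$, produces a Cauchy criterion for $(S_n(x))_n$ in $\mathcal{X}$ via the majorant $M$ times those increments. Once convergence is secured, the remainder of the argument is routine bookkeeping of the two frame constants.
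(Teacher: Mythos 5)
Your overall construction coincides with the paper's: define $S_n(x)=\sum_{i=1}^{m_n}h_{n,i}(x)x_i$, let $K$ be the pointwise limit, extract the uniform bound $C=\sup_n\Vert S_n\Vert$ from Banach--Steinhaus, and chain the lower frame inequality $A\Vert x\Vert\le\Vert\{h_{n,i}(x)\}\Vert_{\mathcal{X}_d}$ with $\Vert K(x)\Vert\le C\Vert x\Vert$ to obtain the lower bound $A/C$; conditions (a), the upper bound $B$, and (c) are handled the same way in both arguments. So the bookkeeping portion of your proposal matches the paper's proof.

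The gap is precisely at the step you yourself flag as the principal obstacle, and the fix you sketch does not work. The functionals $h_{n,i}$ form a triangular array: the coefficients appearing in $S_{n+1}(x)$ are $h_{n+1,i}(x)$, not $h_{n,i}(x)$, so $S_{n+1}(x)-S_n(x)=\sum_{i=1}^{m_n}\bigl(h_{n+1,i}(x)-h_{n,i}(x)\bigr)x_i+\sum_{i=m_n+1}^{m_{n+1}}h_{n+1,i}(x)x_i$, and convergence of the numerical sequence $a_n=\sum_{i=1}^{m_n}|h_{n,i}(x)|$ controls neither piece: the increment $a_{n+1}-a_n$ is a difference of two differently weighted sums, not the absolute sum of a tail. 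Concretely, with $x_i=e$, $\Vert e\Vert=1$, and $h_{n,i}(x)=(-1)^nf(x)/m_n$ for $i\le m_n$, one gets $a_n=|f(x)|$ for every $n$ (so your hypothesis holds) while $S_n(x)=(-1)^nf(x)e$ diverges; thus absolute summability of the array in your sense yields no Cauchy criterion for $(S_n(x))_n$. For what it is worth, the paper does not genuinely close this gap either: it asserts the convergence by appealing to its Theorem 2.6, which characterizes spaces admitting approximative $K$-atomic decompositions and does not deliver convergence from the stated hypotheses. The result appears to require either the additional assumption that $\lim_nS_n(x)$ exists, or the reading in which $h_{n,i}$ is independent of $n$, in which case your majorant argument reduces to the standard absolute-convergence argument for a single series and is correct.
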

\begin{proof}
Since $\{h_{n,i} \}\underset{n \in \mathbb{N}}{_{i=1,2,3,...,m_n}} \subseteq \mathcal{X}^* $ is an approximative $\mathcal{X}_d$-frame for $\mathcal{X}$ with $\sup\limits_{1 \leq n < \infty}\Vert x_n\Vert < \infty$ and $\lim\limits_{n \to \infty}\sum\limits_{i=1}^{m_n}|h_{n,i}(x)| < \infty.$ Then, by Theorem \ref{T1}, we have $\lim\limits_{n \to \infty}\sum\limits_{i=1}^{m_n}h_{n,i}(x)x_i$ exist for all $x\in \mathcal{X}, ~n \in \mathbb{N}.$
\\
Define $K:\mathcal{X}\longrightarrow \mathcal{X}$ by $K(x)=\lim\limits_{n \to \infty}\sum\limits_{i=1}^{m_n}h_{n,i}(x)x_i$, $x\in \mathcal{X}$. Then $K$ is a bounded linear operator such that
\begin{equation*}
\Vert K(x)\Vert_\mathcal{X}\leq\sup\limits_{1\leq n<\infty}\Vert\sum_{i=1}^{m_n}h_{n,i}(x)x_i\Vert_\mathcal{X}\leq C\Vert x\Vert_\mathcal{X},
\end{equation*}
where $C= \sup\limits_{1\leq n <\infty}\sum\limits_{i=1}^{m_n}h_{n,i}(x)x_i$. Thus
\begin{equation*}
\dfrac{A}{C}\Vert K(x)\Vert_\mathcal{X}\leq\parallel\lbrace h_{n,i}(x)\rbrace\Vert_{\mathcal{X}_d}\leq B\parallel x\parallel_\mathcal{X},~\mbox{for all}~ x \in \mathcal{X}.
\end{equation*}
Hence, $(\{x_n\},\{h_{n,i}\})$ is an approximative $K$-atomic decomposition for $\mathcal{X}$ with respect to $\mathcal{X}_d$ with bounds $\dfrac{A}{C}$ and $B$.
\end{proof}
\begin{theorem}
Let $\{h_{n,i} \}\underset{n \in \mathbb{N}}{_{i=1,2,3,...,m_n}} \subseteq \mathcal{X}^* $ be an approximative $\mathcal{X}_d$-frame with bounds $A$,~$B$ and let $\lbrace x_n\rbrace \subseteq \mathcal{X}$. Let $T:\mathcal{X}_d\longrightarrow \mathcal{X}$ given by $T(\lbrace h_{n,i} \rbrace)=\lim\limits_{n \to \infty}\sum\limits_{i=1}^{m_n}h_{n,i}x_i$ be a well defined operator. Then, there exists a linear operator $K \in L(\mathcal{X})$ such that $(\{x_n\},\{h_{n,i}\})$ is an approximative $K$-atomic decomposition for $\mathcal{X}$ with respect to $\mathcal{X}_d.$
\end{theorem}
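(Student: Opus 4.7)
My plan is to take $K$ to be the composition of the analysis-type map associated with $\{h_{n,i}\}$ and the synthesis operator $T$; precisely, set $K := T \circ U$ where $U(x) := \{h_{n,i}(x)\}$. Once $K$ is produced, the three clauses of the definition of approximative $K$-atomic decomposition will fall out directly from the approximative $\mathcal{X}_d$-frame inequalities (\ref{XD}) and the defining formula for $T$.

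First I define $U : \mathcal{X} \to \mathcal{X}_d$ by $U(x) = \{h_{n,i}(x)\}$. Clause (a) of the frame definition makes $U$ well defined, it is clearly linear, and the upper inequality in (\ref{XD}) yields $\|U(x)\|_{\mathcal{X}_d} \leq B\,\|x\|_{\mathcal{X}}$, so $U \in L(\mathcal{X},\mathcal{X}_d)$ with $\|U\| \leq B$. Next I verify that $T$ is bounded. Define the partial-sum maps $T_n(c) := \sum_{i=1}^{m_n} c_{n,i}\,x_i$ for $c = \{c_{n,i}\} \in \mathcal{X}_d$; each $T_n$ is a finite linear combination of BK-space coordinate functionals (which are continuous by definition of a BK-space), hence bounded. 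Since $T_n(c) \to T(c)$ for every $c \in \mathcal{X}_d$ by the standing hypothesis that $T$ is well defined by this limit, the Banach--Steinhaus theorem gives $\sup_n \|T_n\| < \infty$, whence $T \in L(\mathcal{X}_d, \mathcal{X})$.

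Now put $K := T \circ U \in L(\mathcal{X})$. Clause (a) of the approximative $K$-atomic decomposition is inherited from the frame, and clause (c) is immediate from the construction:
\begin{equation*}
K(x) = T(U(x)) = \lim_{n\to\infty}\sum_{i=1}^{m_n} h_{n,i}(x)\,x_i,
\qquad x \in \mathcal{X}.
\end{equation*}
The upper estimate in (b) is just the upper frame bound, while the lower estimate follows from $\|K(x)\|_{\mathcal{X}} \leq \|T\|\,\|\{h_{n,i}(x)\}\|_{\mathcal{X}_d}$, so one may take the lower constant to be $A' = 1/\|T\|$ when $T \neq 0$ (if $T = 0$, then $K = 0$ and the lower bound is automatic). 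This produces an approximative $K$-atomic decomposition with bounds $1/\|T\|$ and $B$.

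The only step requiring any care is passing from well-definedness of $T$ to boundedness; this is handled by the Banach--Steinhaus step above, which leans essentially on the BK-structure of $\mathcal{X}_d$. Apart from that, the argument is a straightforward bookkeeping match against the three clauses of the definition.
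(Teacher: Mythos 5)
Your proposal is correct and takes essentially the same route as the paper: define $U(x)=\{h_{n,i}(x)\}$, set $K=TU$, and invoke uniform boundedness to control $\|K(x)\|$. Your Banach--Steinhaus justification of the boundedness of $T$ via the partial-sum maps $T_n$, and the cleaner lower constant $1/\|T\|$ in place of the paper's $A/C$, are minor refinements of the same argument.
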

\begin{proof}
Define $U:\mathcal{X}\longrightarrow \mathcal{X}_d$ by $U(x)=\lbrace h_{n,i}(x)\rbrace$, $x\in \mathcal{X}$. Then $U$ is well defined and $\Vert U\Vert\leq B$. Take $K=TU$. Then $K(x)=\lim\limits_{n \to \infty}\sum\limits_{i=1}^{m_n}h_{n,i}(x)x_i, ~ x \in \mathcal{X}$. Therefore, by uniform boundedness principle, we have
\begin{equation*}
\Vert K(x)\Vert_\mathcal{X}\leq\sup\limits_{1\leq n<\infty}\Vert \sum\limits_{i=1}^{m_n}h_{n,i}(x)x_i\Vert_\mathcal{X}\leq C\Vert x\Vert_\mathcal{X},~x\in \mathcal{X},
\end{equation*}
where $C=\sup\limits_{1\leq n<\infty}\Vert \sum\limits_{i=1}^{m_n}h_{n,i}(x)x_i\Vert_\mathcal{X}$. Thus, we have
\begin{equation*}
\dfrac{A}{C}\Vert K(x)\Vert~\leq~\parallel\lbrace h_{n,i}(x)\rbrace\Vert\leq B\parallel x\parallel,~ \mbox{for all} \ x \in \mathcal{X}.
\end{equation*}
Hence $(\{x_n\},\{h_{n,i}\})$ is an approximative $K$-atomic decomposition for $\mathcal{X}$ with respect to $\mathcal{X}_d$ with bounds $\dfrac{A}{C}$ and $B$.
\end{proof}

Next, we give the existence of an approximative $K$-atomic decomposition from an approximative $\mathcal{X}_d$ Bessel sequence.
\begin{theorem}
Let $\mathcal{X}$ be a reflexive Banach space and $\mathcal{X}_d$ be a BK-space which has a sequence of canonical unit vectors $\lbrace e_n\rbrace$ as a basis. Let $\{h_{n,i} \}\underset{n \in \mathbb{N}}{_{i=1,2,3,...,m_n}} \subseteq \mathcal{X}^* $ be an approximative $\mathcal{X}_d$-Bessel sequence with bound $B$ and let $\lbrace x_n\rbrace \subseteq \mathcal{X}$. If $\lbrace h(x_n)\rbrace \in (\mathcal{X}_d)^*$ for all $h\in \mathcal{X}^*$, then there exists a bounded linear operator $K \in L(\mathcal{X})$ such that $(\{x_n\},~\{h_{n,i} \}\underset{n \in \mathbb{N}}{_{i=1,2,3,...,m_n}})$ is an approximative $K$-atomic decomposition for $\mathcal{X}$ with respect to $\mathcal{X}_d.$
\end{theorem}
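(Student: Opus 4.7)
The plan is to construct $K$ as the composition $K = TU$, where $U\colon \mathcal{X}\to \mathcal{X}_d$ is the analysis operator associated with the Bessel sequence and $T\colon \mathcal{X}_d\to \mathcal{X}$ is a synthesis-type operator satisfying $T(e_k)=x_k$. Once such a $T$ is produced, the expansion of $U(x)$ in the basis $\{e_n\}$ of $\mathcal{X}_d$ together with continuity of $T$ will automatically deliver both the reconstruction identity $K(x)=\lim_n\sum_{i=1}^{m_n}h_{n,i}(x)x_i$ and the lower frame inequality.

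First I would put $U(x):=\{h_{n,i}(x)\}$. The approximative $\mathcal{X}_d$-Bessel hypothesis gives $U\in L(\mathcal{X},\mathcal{X}_d)$ with $\|U\|\le B$. The non-trivial step is the construction of $T$, which is where reflexivity and the hypothesis $\{h(x_n)\}\in (\mathcal{X}_d)^{*}$ come in. I would define $V\colon \mathcal{X}^{*}\to (\mathcal{X}_d)^{*}$ by $V(h)=\{h(x_n)\}$; this is well-defined by assumption. To see it is bounded I would invoke the closed graph theorem, using that the coordinate functionals $c\mapsto c_k$ are bounded on $\mathcal{X}_d$ (a consequence of $\{e_n\}$ being a Schauder basis of a BK-space): if $h_j\to h$ in $\mathcal{X}^{*}$ and $V(h_j)\to \varphi$ in $(\mathcal{X}_d)^{*}$, then reading off the $k$-th coordinate yields $h_j(x_k)\to h(x_k)$ and also $\to \varphi_k$, forcing $\varphi=V(h)$.

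Next I would take the adjoint $V^{*}\colon (\mathcal{X}_d)^{**}\to \mathcal{X}^{**}$ and, invoking reflexivity of $\mathcal{X}$, identify the codomain with $\mathcal{X}$ via the canonical map $\pi$. Composing with the canonical embedding $\iota\colon \mathcal{X}_d\hookrightarrow (\mathcal{X}_d)^{**}$ gives
\begin{equation*}
T:=\pi^{-1}\circ V^{*}\circ \iota\colon \mathcal{X}_d\longrightarrow \mathcal{X},
\end{equation*}
which is bounded. A direct computation shows that for every $h\in \mathcal{X}^{*}$,
\begin{equation*}
\la \pi(T e_k),h\ra=\la V^{*}\iota(e_k),h\ra=\la \iota(e_k),Vh\ra=(Vh)(e_k)=h(x_k),
\end{equation*}
so $T(e_k)=x_k$. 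Then I would set $K:=TU\in L(\mathcal{X})$. Since $\{e_n\}$ is a basis of $\mathcal{X}_d$, the expansion $U(x)=\sum_k c_k e_k$ (with $c_k$ the appropriate entry of $\{h_{n,i}(x)\}$) converges in $\mathcal{X}_d$, and continuity of $T$ gives $K(x)=\sum_k c_k x_k$; after grouping the partial sums along the prescribed blocks of length $m_n$, this is exactly $\lim_n\sum_{i=1}^{m_n}h_{n,i}(x)x_i$. The bound $\|K(x)\|_{\mathcal{X}}\le \|T\|\,\|U(x)\|_{\mathcal{X}_d}$ together with the Bessel estimate yields
\begin{equation*}
\tfrac{1}{\|T\|}\|K(x)\|_{\mathcal{X}}\le \|\{h_{n,i}(x)\}\|_{\mathcal{X}_d}\le B\|x\|_{\mathcal{X}},
\end{equation*}
which is the desired two-sided inequality with $A=1/\|T\|$.

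The main obstacle I anticipate is the bookkeeping of the double index $(n,i)$ versus the single-indexed basis $\{e_n\}$ of $\mathcal{X}_d$: one has to fix a flattening of $\{h_{n,i}\}$ consistent with the statement so that the partial sums along $i=1,\ldots,m_n$ correspond to genuine partial sums of the basis expansion in $\mathcal{X}_d$, and then argue that these particular partial sums converge (which follows from convergence of the full basis expansion because $\{m_n\}$ is increasing). Verifying the closed-graph hypothesis for $V$ and identifying $V^{*}\iota(e_k)$ with $\pi(x_k)$ are the only other technical steps, both of which are routine once the coordinate functionals are confirmed to lie in $(\mathcal{X}_d)^{*}$.
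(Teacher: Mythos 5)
Your proposal is correct and follows essentially the same route as the paper: the paper also sets $U(x)=\{h_{n,i}(x)\}$, defines $R(h)=\{h(x_n)\}$ into $(\mathcal{X}_d)^*$, restricts its adjoint to $\mathcal{X}_d$ to obtain $T$ with $T(e_j)=x_j$, and takes $K=TU$ with the same norm estimates. Your extra care with the closed graph theorem for the boundedness of $V$ and with the identification $\pi^{-1}\circ V^*\circ\iota$ only makes explicit steps the paper leaves implicit.
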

\begin{proof}
Clearly $U:\mathcal{X} \longrightarrow \mathcal{X}_d$ given by $U(x)=\lbrace h_{n,i}(x)\rbrace,~ x\in \mathcal{X}$ is well defined. Define a map $R:\mathcal{X}^*\longrightarrow (\mathcal{X}_d)^*$ by $R(h)=\lbrace h(x_n)\rbrace, ~x\in \mathcal{X}$. Then, its adjoint $R^*:(\mathcal{X}_d)^{**}\longrightarrow \mathcal{X}^{**}$ is given by $R^*(e_j)(h)=e_j(R(h))=h(x_j)$. Let $T=(R^*)\vert_{\mathcal{X}_d}$ and $\lbrace h_{n,i} \rbrace \in \mathcal{X}_d$. Then
\begin{eqnarray*}
T(\lbrace h_{n,i}\rbrace)=\lim\limits_{n \to \infty}\sum\limits_{i=1}^{m_n}h_{n,i}T(e_i)=\lim\limits_{n \to \infty}\sum\limits_{i=1}^{m_n}h_{n,i}x_i.
\end{eqnarray*}
But $\lbrace h_{n,i}(x)\rbrace \in \mathcal{X}_d$. So $T(\lbrace h_{n,i}(x)\rbrace)=\lim\limits_{n \to \infty}\sum\limits_{i=1}^{m_n}h_{n,i}(x)x_i$.
Take $K = TU $.
Then $K\in L(\mathcal{X}) \ \text{and} \ K(x)=\lim\limits_{n \to \infty}\sum\limits_{i=1}^{m_n}h_{n,i}(x)x_i.$
Moreover, $T$ is a bounded linear operator such that $\Vert K(x)\Vert\leq\Vert T\Vert\Vert\lbrace h_{n,i}(x)\rbrace\Vert$.
Hence
\begin{equation*}
 \dfrac{1}{\Vert T\Vert}\Vert K(x)\Vert\leq\Vert\lbrace h_{n,i}(x)\rbrace\Vert\leq B\Vert x\Vert, ~x\in \mathcal{X}
\end{equation*}
\end{proof}
Next, we construct an approximative $K^*$-atomic decomposition for $\mathcal{X}^*$ from a given approximative $K$-atomic decomposition for $\mathcal{X}.$
\begin{theorem}
Let $\mathcal{X}_d$ be a BK-space with dual $(\mathcal{X}_d)^*$ and let $\mathcal{X}_d$ and $(\mathcal{X}_d)^*$ have sequences of canonical unit vectors $\lbrace e_n\rbrace  \ \text{and}\ \   \lbrace v_{n}\rbrace$ respectively as bases. Let $(\{x_n\},\{h_{n,i} \}\underset{n \in \mathbb{N}}{_{i=1,2,3,...,m_n}})$ be an approximative $K$-atomic decomposition for $\mathcal{X}$ with respect to $\mathcal{X}_d$. Let  $S:\mathcal{X}_d\longrightarrow \mathcal{X}$ given by $S(\lbrace h_{n,i}\rbrace)=\lim\limits_{n \to \infty}\sum\limits_{i=1}^{m_n}h_{n,i}x_i$ be a well defined mapping. Then, $(\{h_{n,i} \}\underset{n \in \mathbb{N}}{_{i=1,2,3,...,m_n}},\pi(x_n))$ is an approximative $K^*$-atomic decomposition for $\mathcal{X}^*$ with respect to $(\mathcal{X}_d)^*$.
\end{theorem}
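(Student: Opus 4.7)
The plan is to realise $K$ as a composition $K=SU$ of analysis and synthesis operators and then deduce the $K^{*}$-atomic decomposition by passing to adjoints.

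First I would define the analysis operator $U:\mathcal{X}\to\mathcal{X}_d$ by $U(x)=\{h_{n,i}(x)\}$. The upper bound in condition (b) of the given approximative $K$-atomic decomposition says $\|U(x)\|_{\mathcal{X}_d}\le B\,\|x\|_{\mathcal{X}}$, so $U$ is a bounded linear operator with $\|U\|\le B$, and condition (c) gives exactly $K=S\circ U$. Taking adjoints then yields a bounded factorisation $K^{*}=U^{*}\circ S^{*}$ with $S^{*}:\mathcal{X}^{*}\to(\mathcal{X}_d)^{*}$ and $U^{*}:(\mathcal{X}_d)^{*}\to\mathcal{X}^{*}$.

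Second, I would compute both adjoints on the canonical unit bases. Since $\{e_n\}$ is a basis of $\mathcal{X}_d$ and the formula for $S$ yields $S(e_i)=x_i$, one has $S^{*}(f)(e_i)=f(x_i)=\pi(x_i)(f)$, so $S^{*}(f)\in(\mathcal{X}_d)^{*}$ has coordinate sequence $\{\pi(x_i)(f)\}$ in the biorthogonal basis $\{v_n\}$. This verifies axiom (a) of the target $K^{*}$-atomic decomposition. Symmetrically $U^{*}(v_{n,i})(x)=v_{n,i}(U(x))=v_{n,i}(\{h_{k,j}(x)\})=h_{n,i}(x)$, hence $U^{*}(v_{n,i})=h_{n,i}$. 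The frame-type inequalities (b) then follow from operator-norm bookkeeping: $\|\{\pi(x_i)(f)\}\|_{(\mathcal{X}_d)^{*}}=\|S^{*}(f)\|\le\|S\|\,\|f\|_{\mathcal{X}^{*}}$ for the upper bound, and $\|K^{*}(f)\|_{\mathcal{X}^{*}}=\|U^{*}S^{*}(f)\|\le\|U\|\,\|\{\pi(x_i)(f)\}\|_{(\mathcal{X}_d)^{*}}\le B\,\|\{\pi(x_i)(f)\}\|_{(\mathcal{X}_d)^{*}}$ for the lower bound.

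Finally, for the reconstruction identity (c), I would expand $S^{*}(f)$ in the basis $\{v_n\}$ of $(\mathcal{X}_d)^{*}$, namely
\begin{equation*}
S^{*}(f)=\lim_{n\to\infty}\sum_{i=1}^{m_n}\pi(x_i)(f)\,v_{n,i}
\end{equation*}
with norm convergence in $(\mathcal{X}_d)^{*}$, and push this expansion through the continuous operator $U^{*}$ together with the identification $U^{*}(v_{n,i})=h_{n,i}$ to obtain
\begin{equation*}
K^{*}(f)=U^{*}(S^{*}(f))=\lim_{n\to\infty}\sum_{i=1}^{m_n}f(x_i)\,h_{n,i}\qquad\text{in }\mathcal{X}^{*}.
\end{equation*}
The main obstacle I foresee is matching the ordering of the Schauder basis $\{v_n\}$ of $(\mathcal{X}_d)^{*}$ to the block structure $\{m_n\}$ imposed by the approximative decomposition, so that the truncated partial sums genuinely align with those required by the definition. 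This is precisely where the twin hypotheses that $\{e_n\}$ and $\{v_n\}$ are canonical-unit-vector bases of $\mathcal{X}_d$ and $(\mathcal{X}_d)^{*}$ are essential: they upgrade the merely weak$^{*}$ convergence of the transposed partial sums to the norm convergence demanded by the definition of an approximative $K^{*}$-atomic decomposition.
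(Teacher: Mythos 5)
Your proposal is correct and follows essentially the same route as the paper: the paper also factors $K=S\circ R$ (your $U$ is its $R$), passes to adjoints to get $K^{*}=R^{*}S^{*}$, identifies $S^{*}(h)=\{h(x_n)\}\in(\mathcal{X}_d)^{*}$ and $R^{*}(v_j)=h_{j,i}$ via the canonical unit vector bases, and reads off the two bounds from $\|S^{*}(h)\|\le\|S\|\|h\|$ and $\|K^{*}(h)\|\le\|R^{*}\|\|\{h(x_n)\}\|$. Your derivation of the reconstruction identity by expanding $S^{*}(f)$ in the basis $\{v_n\}$ and pushing it through the continuous $U^{*}$ is the same mechanism the paper uses when it computes $R^{*}(\{g_{n,i}\})=\lim\limits_{n\to\infty}\sum_{i=1}^{m_n}g_{n,i}h_{n,i}$.
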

\begin{proof} Since $(\{x_n\},\{h_{n,i} \}\underset{n \in \mathbb{N}}{_{i=1,2,3,...,m_n}})$ is an approximative $K$-atomic decomposition for $\mathcal{X}$ with respect to $\mathcal{X}_d,$ so for each $x\in \mathcal{X}$, $K(x)=\lim\limits_{n \to \infty}\sum\limits_{i=1}^{m_n}h_{n,i}(x)x_i.$ Thus
$h(K(x))=\lim\limits_{n \to \infty}\sum\limits_{i=1}^{m_n}h_{n,i}(x)h(x_i).$ Therefore, by Theorem $\ref{T1}$ we have $\lim\limits_{n \to \infty}\sum\limits_{i=1}^{m_n}h(x_i)h_{n,i}$ exist for all $h\in \mathcal{X}^*.$ Also, for $x\in \mathcal{X}$, we compute
\begin{equation*}
(K^*(h))(x)=h(\lim\limits_{n \to \infty}\sum\limits_{i=1}^{m_n}h_{n,i}(x)x_i)=\lim\limits_{n \to \infty}\sum\limits_{i=1}^{m_n}h(x_i)h_{n,i}(x).
\end{equation*}
This gives $K^*(h)=\lim\limits_{n \to \infty}\sum\limits_{i=1}^{m_n}h(x_i)h_{n,i}$, for $h\in \mathcal{X}^*$. Note that
\\
 $S^*(h)(e_j) = h(S(e_j)) = h(x_j), h\in \mathcal{X}^*$. So, $S^*(h) = \lbrace h(x_n)\rbrace$ and $\lbrace h(x_n)\rbrace = \lbrace h(S(e_n))\rbrace \in (\mathcal{X}_d)^*, h\in \mathcal{X}^*$.
Also
\begin{eqnarray*}
\Vert\lbrace h(x_n)\rbrace\Vert_{(\mathcal{X}_d)^*} = \Vert S^*(h)\Vert\leq\Vert S\Vert\Vert h\Vert_{\mathcal{X}^*},~ h\in \mathcal{X}^*.
\end{eqnarray*}
Define $R:\mathcal{X} \longrightarrow \mathcal{X}_d$ by $R(x) = \lbrace h_{n,i}(x)\rbrace, x\in \mathcal{X}$. Then, $R^*(v_j)(x) = v_j(R(x)) = h_{j,i}(x)$, $x\in \mathcal{X}$. So, $R^*(v_j)=h_{j,i}$, for all $j\in \mathbb{N}$ and for $\lbrace g_{n,i} \rbrace \in (\mathcal{X}_d)^*$ we have
 \begin{eqnarray*}
 R^*(\lbrace g_{n,i}\rbrace)=R^*(\lim\limits_{n \to \infty}\sum\limits_{i=1}^{m_n}g_{n,i}(x)v_i)
 =\lim\limits_{n \to \infty}\sum\limits_{i=1}^{m_n}g_{n,i}(x)h_{n,i}.
 \end{eqnarray*}
 Therefore, we have
 \begin{eqnarray*}
  R^*S^*(h)=R^*(\lbrace h(x_i)\rbrace)=\lim\limits_{n \to \infty}\sum\limits_{i=1}^{m_n}h(x_i)h_{n,i}, ~h \in \mathcal{X}^*.
\end{eqnarray*}
 Note that, $K^*=R^*S^*$ and so
 \begin{eqnarray*}
 \Vert K^*(h)\Vert_{\mathcal{X}^*}=\Vert R^*S^*(h)\Vert_{\mathcal{X}^*}\leq\Vert R^*\Vert\Vert\lbrace h(x_n)\rbrace\Vert_{(\mathcal{X}_d)^*}, ~h\in \mathcal{X}^*.
 \end{eqnarray*}
 This gives
\begin{equation}
\dfrac{1}{\Vert R^*\Vert}\Vert K^*(h)\Vert_{\mathcal{X}^*}\leq\Vert\lbrace h(x_n)\rbrace\Vert_{(\mathcal{X}_d)^*}\leq\Vert S\Vert\Vert h\Vert_{\mathcal{X}^*} ,~ h \in \mathcal{X}^*.
\end{equation}
Hence, $(\{h_{n,i} \}\underset{n \in \mathbb{N}}{_{i=1,2,3,...,m_n}},\pi(x_n))$ is an approximative $K^*$-atomic decomposition for $\mathcal{X}^*$ with respect to $(\mathcal{X}_d)^*$.
\end{proof}
Next, we give the following result characterizing the class of approximative $K$-atomic decompositions.
\begin{theorem}
Let $(\{x_n\},\{h_{n,i} \}\underset{n \in \mathbb{N}}{_{i=1,2,3,...,m_n}})$  be an approximative $K$-atomic decomposition for $\mathcal{X}$ with respect to $\mathcal{X}_d$ with bounds $A$ and $B$. Let $T:\mathcal{X}_d\longrightarrow \mathcal{X}$ given by $T(\lbrace h_{n,i} \rbrace) =  \lim\limits_{n \to \infty}\sum\limits_{i=1}^{m_n}h_{n,i}x_i$ is well defined for $\lbrace h_{n,i} \rbrace \in \mathcal{X}_d$ and let $U:\mathcal{X} \longrightarrow \mathcal{X}_d$ be the mapping given by $U(x)=\lbrace h_{n,i}(x)\rbrace$. If $K$ is invertible, then the following statements are equivalent.
\begin{enumerate}[(a)]
\item $T$ is the pseudo inverse of $U$.
\item $(\{x_n\},\{h_{n,i} \}\underset{n \in \mathbb{N}}{_{i=1,2,3,...,m_n}})$ is an approximative atomic decomposition for $\mathcal{X}$ with respect to $\mathcal{X}_d.$
\item $T$ is a linear extension of $U^{-1}:U(\mathcal{X})\longrightarrow \mathcal{X}$.
\item $U(\mathcal{X})$ is a complemented subspace of $\mathcal{X}_d.$
\item $KerT$ is a complemented subspace of $\mathcal{X}_d$ and $T$ is surjective.
\end{enumerate}
\end{theorem}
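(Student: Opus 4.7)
The cornerstone of the proof is the identity
$$TU(x)\;=\;T(\{h_{n,i}(x)\})\;=\;\lim_{n\to\infty}\sum_{i=1}^{m_n}h_{n,i}(x)\,x_i\;=\;K(x),\qquad x\in\mathcal{X},$$
that is, $TU=K$. Since $K$ is invertible, this immediately forces $U$ to be injective (from $U(x)=0$ it follows that $K(x)=TU(x)=0$, hence $x=0$) and $T$ to be surjective (as $T(U(\mathcal{X}))=K(\mathcal{X})=\mathcal{X}$). Both facts are used throughout.

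I would proceed around the cycle (a) $\Rightarrow$ (b) $\Rightarrow$ (c) $\Rightarrow$ (d) $\Rightarrow$ (e) $\Rightarrow$ (a). For (a) $\Rightarrow$ (b), Lemma \ref{L1} yields $TU=I_{\mathcal{X}}-P$ for the projection $P$ onto $\ker U=\{0\}$, so $TU=I_{\mathcal{X}}$ and, comparing with $TU=K$, $K=I_{\mathcal{X}}$; the reconstruction in the definition of approximative $K$-atomic decomposition then is exactly the reconstruction of an approximative atomic decomposition. For (b) $\Rightarrow$ (c), the equality $T(U(x))=x=U^{-1}(U(x))$ for all $x\in\mathcal{X}$ says precisely that $T$ extends $U^{-1}:U(\mathcal{X})\to\mathcal{X}$ linearly to all of $\mathcal{X}_d$. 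For (c) $\Rightarrow$ (d), I would verify that $UT:\mathcal{X}_d\to\mathcal{X}_d$ is a continuous projection with range $U(\mathcal{X})$, since $(UT)^2=U(TU)T=U I_{\mathcal{X}} T=UT$ (using that (c) forces $TU=I_{\mathcal{X}}$) and $UT(\mathcal{X}_d)=U(\mathcal{X})$.

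For (d) $\Rightarrow$ (e), the invertibility of $K$ supplies a canonical right inverse $T^{\dagger}:=UK^{-1}$ of $T$: indeed $TT^{\dagger}=TUK^{-1}=KK^{-1}=I_{\mathcal{X}}$, so $T$ is surjective, and $T^{\dagger}T$ is a continuous idempotent whose kernel is precisely $\ker T$, exhibiting $(I_{\mathcal{X}_d}-T^{\dagger}T)(\mathcal{X}_d)$ as a complement of $\ker T$ in $\mathcal{X}_d$. The step I expect to require the most care is (e) $\Rightarrow$ (a): Lemma \ref{L1} applied to (e) produces some pseudo inverse of $T$, but one must then leverage the identity $TU=K$ with $K$ invertible to conclude that $T$ itself serves as the pseudo inverse of $U$. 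The plan is to combine the splitting of $\mathcal{X}_d$ furnished by (e) with the canonical right inverse $UK^{-1}$ of $T$ to form the candidate projection $Q:=UK^{-1}T$ onto $U(\mathcal{X})$, and then to check directly the defining Lemma \ref{L1} identities $TU=I_{\mathcal{X}}-0$ (trivial since $\ker U=\{0\}$) and $UT=Q$; the latter is where the interplay between the complementedness of $\ker T$ supplied by (e) and the structural identity $TU=K$ is essential, and is the real content of this implication.
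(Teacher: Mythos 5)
Your structural identity $TU=K$ and the links (a)$\Rightarrow$(b)$\Rightarrow$(c)$\Rightarrow$(d) are sound (indeed (b)$\Rightarrow$(c) supplies a step the paper never writes down), but the cycle cannot be closed: (e)$\Rightarrow$(a) is a genuine gap, and not one that more care will fill. As your own (a)$\Rightarrow$(b) argument shows, (a) forces $TU=I_{\mathcal{X}}$, i.e.\ $K=I_{\mathcal{X}}$, and the same is true of (b) and (c). On the other hand, your (d)$\Rightarrow$(e) argument never uses (d): it derives (e) purely from $TU=K$ with $K$ invertible, and the same idempotent $UK^{-1}T$ (bounded, with range $U(\mathcal{X})$ and kernel $\ker T$) shows (d) is automatic as well. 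So a correct proof of (e)$\Rightarrow$(a) would yield $K=I_{\mathcal{X}}$ for \emph{every} invertible $K$, which is false: starting from an ordinary approximative atomic decomposition, replace each $x_n$ by $2x_n$ and take $K=2I_{\mathcal{X}}$; then (d) and (e) hold, while $UTU=2U\neq U$, $T(U(x))=2x\neq U^{-1}(U(x))$, and $\lim_{n}\sum_{i=1}^{m_n}h_{n,i}(x)(2x_i)=2x\neq x$, so (a), (b), (c) all fail. Concretely, the identity $UT=UK^{-1}T$ that you propose to ``check directly'' is equivalent, by injectivity of $U$ and surjectivity of $T$, to $K=I_{\mathcal{X}}$, and nothing in (e) supplies that.

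For what it is worth, the paper does not escape this either: it routes every implication through (b), and its proofs of (d)$\Rightarrow$(b) and (e)$\Rightarrow$(b) founder on exactly the rock you identified, by silently writing the projection $P$ of $\mathcal{X}_d$ onto $U(\mathcal{X})$ as $P(\{c_{n,i}\})=U\bigl(\lim_{n}\sum_{i=1}^{m_n}c_{n,i}x_i\bigr)=UT(\{c_{n,i}\})$ --- but $UT$ is idempotent only when $K=I_{\mathcal{X}}$, which is precisely what is to be proved. The defect is therefore in the statement itself ((d) and (e) are consequences of the standing hypotheses, while (a), (b), (c) each characterize $K=I_{\mathcal{X}}$); your proposal is correct exactly up to the point where the theorem stops being true.
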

\begin{proof}
$(a)\Rightarrow(b)$
By hypothesis, $\lbrace x\in \mathcal{X}:h_{n,i}(x)=0,~ for \ all \ n \in\NN\rbrace=\lbrace 0\rbrace$. So, $KerU=\lbrace 0\rbrace$. Since $T$ is the pseudo inverse of $U$, by Lemma \ref{L1} there exists a continuous projection operator $\theta:\mathcal{X} \longrightarrow \mathcal{X}$ such that $TU=I_\mathcal{X}-\theta$ and $kerU=\theta(\mathcal{X})$. Thus, for each $x\in \mathcal{X},$ we have
\begin{eqnarray*}
TU(x)=(I_\mathcal{X}-\theta)(x)=x, ~x\in \mathcal{X}.
\end{eqnarray*}
 Hence, for every $x\in \mathcal{X}$, $\lim\limits_{n \to \infty}\sum\limits_{i=1}^{m_n}h_{n,i}(x)x_i=x$.
\\
$(b)\Rightarrow(a) \ $For $x\in \mathcal{X}$, we have
\begin{eqnarray*}
UTU(x)=UT(\lbrace h_{n,i}(x) \rbrace)=U(\lim\limits_{n \to \infty}\sum\limits_{i=1}^{m_n}h_{n,i}(x)x_i)=U(x).
\end{eqnarray*}
 Hence, $UTU=U$.
\\
$(c)\Rightarrow(b) \ \  $If $T$ is a linear extension of $U^{-1}:U(\mathcal{X})\longrightarrow \mathcal{X}$, then $TU:\mathcal{X}\longrightarrow \mathcal{X}$ is the identity map on $\mathcal{X}$. So, $TU(x)=x$ and $\lim\limits_{n \to \infty}\sum\limits_{i=1}^{m_n}h_{n,i}(x)x_i=x$.
\\
$(c)\Rightarrow(a) \  $Obvious, since $UTU=UI_\mathcal{X}=U.$
\\
(d)$\Rightarrow$(b) \ Suppose $\mathcal{X}_d=U(\mathcal{X})\oplus G$, where $G$ is a closed subspace of $\mathcal{X}_d$. Let \emph{P} be a projection of $\mathcal{X}_d$ onto $U(\mathcal{X})$ along $G$.\\Then, $\emph{P}(\lbrace h_{n,i} \rbrace)=\lbrace g_{n,i}(\lim\limits_{n \to \infty}\sum\limits_{i=1}^{m_n} h_{n,i}x_i)\rbrace$, for all $\lbrace h_{n,i}\rbrace \in \mathcal{X}_d$. Therefore
\begin{eqnarray*}
U^{-1}\circ \emph{P}(\lbrace h_{n,i} \rbrace)&=&U^{-1}\lbrace g_{n,i}(\lim\limits_{n \to \infty}\sum\limits_{i=1}^{m_n} h_{n,i}x_i)\rbrace=\lim\limits_{n \to \infty}\sum\limits_{i=1}^{m_n} h_{n,i}x_i
\\
&=&T(\lbrace h_{n,i} \rbrace),~ for \ all \ \lbrace h_{n,i} \rbrace\in \mathcal{X}_d.
\end{eqnarray*}
This gives, T=$U^{-1}\circ \emph{P}$ and
\begin{eqnarray*}
T(\lbrace h_{n,i}(x)\rbrace)=U^{-1}\circ \emph{P}(\lbrace h_{n,i}(x)\rbrace)=U^{-1}(\lbrace h_{n,i}(x)\rbrace.
\end{eqnarray*}
Hence, $x=\lim\limits_{n \to \infty}\sum\limits_{i=1}^{m_n} h_{n,i}(x)x_i$, for all $x\in \mathcal{X}$.
\\
(b)$\Rightarrow$(d) \ Obvious.
\\
(e)$\Rightarrow$(b) Let $\mathcal{X}_d=kerT\oplus M$, where $M$ is a closed subspace of $\mathcal{X}_d.$ Take $\Upsilon=kerT\oplus U(\mathcal{X}).$ Let $Q:\mathcal{X}_d\longrightarrow M$ be a projection from $\mathcal{X}_d$ onto $M$ along $kerT$. Define $L:\mathcal{X}_d\longrightarrow \Upsilon$ by $L(\alpha)=(\alpha -Q(\alpha),UT(\alpha))$, for $\alpha=\lbrace h_{n,i} \rbrace \in \mathcal{X}_d.$ Let $L(\alpha)=0$. This gives $Q(\alpha)= \alpha.$ So $\alpha \in M.$ Let $UT(\alpha)=0.$ Then
\begin{eqnarray*}
U(\lim\limits_{n \to \infty}\sum\limits_{i=1}^{m_n} h_{n,i}x_i)=\lbrace g_{n,i}(\lim\limits_{n \to \infty}\sum\limits_{i=1}^{m_n} h_{n,i}x_i)\rbrace=0,~ for \ n\in \mathbb{N}.
\end{eqnarray*}
 This gives $\lim\limits_{n \to \infty}\sum\limits_{i=1}^{m_n} h_{n,i}x_i=0$ and so, $\alpha \in kerT$. Thus, $\alpha \in kerT \cap M=\lbrace 0\rbrace$. Hence, $L$ is one-one.
\\
Let $(\alpha_0,U(x)) \in kerT\oplus U(\mathcal{X}),$ for $\alpha_0 \in kerU$ and $U(x)\in U(\mathcal{X}).$
\\
Since, $T$ is onto, for each $x\in \mathcal{X}$, there exists $\beta\in \mathcal{X}_d$ such that $T(\beta)$ = $x$ and this gives $UT(\beta)=U( x ).$ Take $\alpha=\alpha_0+Q(\beta)$. Then $Q(\alpha)=Q(\alpha_0)+Q^2(\beta)=Q(\beta)$ and $\alpha_0=\alpha-Q(\alpha)$. Also, we have
\begin{eqnarray}
UT(\alpha)=UT(\alpha-\alpha_0)=UT(Q(\beta))=UT(\beta)=U( x ).
\end{eqnarray}
Thus $L(\alpha)=(\alpha_0,UT(x))$
 and $L$ is an isomorphism from $\mathcal{X}_d$ onto $\Upsilon.$ So, there is a projection $\emph{P}=UT:\mathcal{X}_d\longrightarrow U(\mathcal{X})$ onto $U(\mathcal{X})$ along $ kerT $. This gives
 \begin{eqnarray*}
 U^{-1}\circ \emph{P}=T \ \ and \ \ U^{-1}\circ \emph{P}(\lbrace h_{n,i}(x)\rbrace)=T(\lbrace h_{n,i}(x)\rbrace).
 \end{eqnarray*}
 Finally, we compute
 \begin{eqnarray*}
 U^{-1}(\lbrace h_{n,i}(x) \rbrace)=\lim\limits_{n \to \infty}\sum\limits_{i=1}^{m_n}h_{n,i}(x)x_i \ \ and \ \  x=\lim\limits_{n \to \infty}\sum\limits_{i=1}^{m_n} h_{n,i}(x)x_i.
 \end{eqnarray*}
Therefore, $(\{x_n\},\{h_{n,i} \}\underset{n \in \mathbb{N}}{_{i=1,2,3,...,m_n}})$ is an approximative atomic decomposition for $\mathcal{X}$ with respect to $\mathcal{X}_d$.
\\
(b)$\Rightarrow$(e) Obvious.
\end{proof}
In the following result, we prove a duality type approximative $K$-atomic decomposition for $\mathcal{X}$.
\begin{theorem}
Let $\mathcal{X}_d$ be a reflexive BK-space with its dual $(\mathcal{X}_d)^*$ and let sequences of canonical unit vectors $\lbrace e_n\rbrace \ and \  \lbrace v_n\rbrace$ be bases for $\mathcal{X}_d$ and $(\mathcal{X}_d)^*$, respectively. Let $(\{h_{n,i} \}\underset{n \in \mathbb{N}}{_{i=1,2,3,...,m_n}},\pi(x_n))$ be an approximative $K$-atomic decomposition for $\mathcal{X}^*$ with respect to $(\mathcal{X}_d)^*.$ If $S:({\mathcal{X}_d})^*\longrightarrow \mathcal{X}^*$ given by $S(\lbrace d_i\rbrace)=\lim\limits_{n \to \infty}\sum\limits_{i=1}^{m_n}d_ih_{n,i}$  is well defined for $\lbrace d_i\rbrace \in \mathcal{X}_d^*$, then there exists a linear operator $L\in L(\mathcal{X})$ such that $(\{x_n\},\{h_{n,i} \}\underset{n \in \mathbb{N}}{_{i=1,2,3,...,m_n}})$ is an approximative $L$-atomic decomposition for $\mathcal{X}$ with respect to $\mathcal{X}_d.$
\end{theorem}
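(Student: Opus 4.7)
The plan is to build $L$ as a composition $L = T \circ U$, where $U\colon \mathcal{X}\to\mathcal{X}_d$ is an ``analysis'' operator and $T\colon \mathcal{X}_d\to\mathcal{X}$ a ``synthesis'' operator, both obtained by dualising objects already in hand. I would begin by observing that the given $S$ is automatically bounded: the partial-sum maps $S_n(\{d_i\}) = \sum_{i=1}^{m_n} d_i h_{n,i}$ are bounded linear operators from $(\mathcal{X}_d)^*$ to $\mathcal{X}^*$ and converge pointwise to $S$, so the uniform boundedness principle gives $\sup_n \|S_n\| < \infty$ and hence $\|S\| < \infty$.

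For the analysis operator, set $U := S^* \circ \pi$. By reflexivity of $\mathcal{X}_d$ we have $S^*\colon \mathcal{X}^{**} \to (\mathcal{X}_d)^{**} = \mathcal{X}_d$, so $U$ is a bounded map $\mathcal{X} \to \mathcal{X}_d$ with $\|U\| \leq \|S\|$. Evaluating $U(x)$ against the canonical dual basis $\{v_j\}$ of $(\mathcal{X}_d)^*$ yields $U(x)(v_j) = \pi(x)(S(v_j)) = S(v_j)(x) = h_j(x)$ (after matching the single index $j$ with the pair $(n,i)$), so $U(x) = \{h_{n,i}(x)\}$.

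For the synthesis operator, I would dualise the analysis operator of the given dual-side decomposition, namely $R\colon \mathcal{X}^* \to (\mathcal{X}_d)^*$, $R(h) = \{h(x_n)\}$, which is bounded with $\|R\| \leq B$ by the upper bound of the approximative $K$-atomic decomposition for $\mathcal{X}^*$. Its adjoint, again using reflexivity, is a bounded operator $R^*\colon \mathcal{X}_d \to \mathcal{X}^{**}$ satisfying $R^*(e_j)(h) = e_j(R(h)) = h(x_j)$, i.e.\ $R^*(e_j) = \pi(x_j)$. Since $\{e_n\}$ is a basis of $\mathcal{X}_d$, continuity of $R^*$ gives, for any $a = \{a_k\} \in \mathcal{X}_d$, the norm convergence $\sum_{k=1}^{m_n} a_k \pi(x_k) \to R^*(a)$ in $\mathcal{X}^{**}$. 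Because $\pi$ is an isometry and $\mathcal{X}$ is complete, the pre-image partial sums $\sum_{k=1}^{m_n} a_k x_k$ are Cauchy in $\mathcal{X}$ and hence converge to some $y(a) \in \mathcal{X}$ with $\pi(y(a)) = R^*(a)$. Thus $R^*(\mathcal{X}_d) \subseteq \pi(\mathcal{X})$, and $T := \pi^{-1} \circ R^*$ is a well-defined bounded operator $\mathcal{X}_d \to \mathcal{X}$ with $\|T\| \leq \|R\|$.

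Finally, setting $L := T \circ U \in L(\mathcal{X})$ produces the required decomposition: $L(x) = T(\{h_{n,i}(x)\}) = \lim_n \sum_{i=1}^{m_n} h_{n,i}(x) x_i$, and the two-sided estimate
$$\frac{1}{\|T\|}\|L(x)\|_{\mathcal{X}} \leq \|\{h_{n,i}(x)\}\|_{\mathcal{X}_d} \leq \|U\|\,\|x\|_{\mathcal{X}}$$
follows at once from the operator norms of $T$ and $U$, with bounds $A' = 1/\|T\|$ and $B' = \|U\|$. The main obstacle I expect is purely bookkeeping: pinning down the precise correspondence between the doubly-indexed family $\{h_{n,i}\}$ and the singly-indexed canonical bases $\{e_n\}$, $\{v_n\}$ of $\mathcal{X}_d$ and $(\mathcal{X}_d)^*$, so that the identifications $R^*(e_j) = \pi(x_j)$ and $S(v_j) = h_j$ really match the partial-sum structure with its $m_n$-cutoffs used throughout; once that is settled the proof collapses to the identity $\pi L = K^* \pi$.
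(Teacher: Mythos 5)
Your proposal is correct and takes essentially the same route as the paper's own proof: both define the analysis operator as $S^*$ composed with the canonical embedding (the paper writes $U=S^*\mid_{\mathcal{X}}$) and the synthesis operator as the restriction of $R^*$ to $\mathcal{X}_d$ where $R(h)=\lbrace h(x_n)\rbrace$, then set $L=TU$ and obtain the bounds from $\Vert T\Vert$ and $\Vert S\Vert$. Your write-up is in fact slightly more careful than the paper's, e.g.\ in justifying the boundedness of $S$ via uniform boundedness and in verifying that $R^*(\mathcal{X}_d)\subseteq\pi(\mathcal{X})$ before forming $T=\pi^{-1}\circ R^*$.
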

\begin{proof} Since $(\{h_{n,i} \}\underset{n \in \mathbb{N}}{_{i=1,2,3,...,m_n}},\pi(x_n))$ is an approximative $K$-atomic decomposition for $\mathcal{X}^*$ with respect to $(\mathcal{X}_d)^*.$
For $h\in \mathcal{X}^*$, we have $K(h)=\lim\limits_{n \to \infty}\sum\limits_{i=1}^{m_n}h(x_i)h_{n,i}$. Also, by Theorem \ref{T1}
 we have $\lim\limits_{n \to \infty}\sum\limits_{i=1}^{m_n}h_{n,i}(x)x_i$ exist, for all $x\in \mathcal{X}.$
Define$ L:\mathcal{X} \longrightarrow \mathcal{X}$  by \ $L(x) = \lim\limits_{n \to \infty}\sum\limits_{i=1}^{m_n}h_{n,i}(x)x_i,~ x\in \mathcal{X}.$ Note that $S(v_n)=h_{n,i},~n\in \NN$ and for $x\in \mathcal{X}$, the linear bounded operator $S^*:\mathcal{X}^{**}\longrightarrow (\mathcal{X}_d)^{**}$ satisfies
\begin{eqnarray*}
S^*(\pi(x))(v_n)=\pi(x)S(v_n) = h_{n,i}(x).
\end{eqnarray*}
So, $\lbrace h_{n,i}(x)\rbrace$ is identified with $S^*(\pi(x)) \in (\mathcal{X}_d)^{**}=\mathcal{X}_d$. Further, we have
\begin{eqnarray}
 \Vert\lbrace h_{n,i}(x)\rbrace\Vert_{\mathcal{X}_d} = \Vert S^*(\pi(x))\Vert_{\mathcal{X}_d} \leq\parallel S\parallel\parallel x\parallel_\mathcal{X},~ x\in \mathcal{X}.
\end{eqnarray}
Letting $U={S^*}\mid_\mathcal{X}$, we have $ U(x) $ =$\lbrace h_{n,i}(x)\rbrace$ and $\Vert U\Vert\leq\Vert S\Vert$.\\
Define $R:\mathcal{X}^*\longrightarrow (\mathcal{X}_d)^*$ by $R(f)=\lbrace h(x_n)\rbrace,h\in \mathcal{X}^*$. Then
\begin{eqnarray*}
R^*(e_j)(h)=e_j(R(h)) = h(x_j), ~ \ h\in \mathcal{X}^*.
\end{eqnarray*}
 So, $R^*(e_j)=x_j, \mbox{for all} \ j \in \mathbb{N}$. Take $T=(R^*)\vert_{\mathcal{X}_d}$. Then, for $\lbrace h_{n,i} \rbrace\in \mathcal{X}_d$ we compute
\begin{eqnarray*}
T(\lbrace h_{n,i} \rbrace)=T(h_{n,i}e_i)
=\lim\limits_{n \to \infty}\sum\limits_{i=1}^{m_n}h_{n,i}T(e_i)
=\lim\limits_{n \to \infty}\sum\limits_{i=1}^{m_n}h_{n,i}x_i.
\end{eqnarray*}
 Thus, $TU(x)=\lim\limits_{n \to \infty}\sum\limits_{i=1}^{m_n}h_{n,i}(x)x_i$, for all $x\in \mathcal{X}$ and this gives $TU = L$ on $\mathcal{X}.$ Therefore, $\dfrac{1}{\Vert T\Vert}\Vert L(x)\Vert_{\mathcal{X}}\leq\Vert\lbrace h_{n,i}(x)\rbrace\Vert_{\mathcal{X}_d}.$ Then
 \begin{eqnarray*}
 \dfrac{1}{\Vert T\Vert}\Vert L(x)\Vert_{\mathcal{X}}\leq\Vert\lbrace h_{n,i}(x)\rbrace\Vert_{\mathcal{X}_d}\leq\Vert S\Vert\Vert x\Vert_{\mathcal{X}}.
 \end{eqnarray*}
 Hence, $(\{x_n\},\{h_{n,i} \}\underset{n \in \mathbb{N}}{_{i=1,2,3,...,m_n}})$ is an approximative $L$-atomic decomposition for $\mathcal{X}$ with respect to $\mathcal{X}_d.$
\end{proof}
\section{Possible Application}
One of the most important device in modern world is digital camera. In our notation a digital picture is a two-dimensional sequence, $\{h_{nm}\}.$ So, it can be seen either as an infinite length sequence with a finite number of non-zeros samples; that is
$\{h_{nm}\},~~ n, m \in \mathbb{Z},$ or as a sequence with domain $n \in \{0,1,2,..., N-1\},$ $m \in \{0,1,2,..., M-1\},$ can be expressed as a matrix:
 $$h= \begin{bmatrix} h_{0,0} & h_{0,1},&.&.&.,&h_{M-1}\\
 h_{1,0} & h_{1,1},&.&.&.,&h_{M-1}\\
  h_{N-1,0} & h_{N-1,1},&.&.&.,&h_{N-1, M-1};
 \end{bmatrix}$$ where each elements $h_{nm}$ is called a pixel and the image has $NM$ pixels. In real life for $h_{n,m}$ to represent colour image, it must have more than one component, usually, red, green and blue components are used(RGB colour space).

\end{document}